\documentclass[11pt]{article}\textwidth 160mm\textheight 235mm
\oddsidemargin-2mm\evensidemargin-2mm\topmargin-10mm
\usepackage{amsfonts} \usepackage{amssymb} \usepackage{graphicx}
\usepackage{amsmath} \usepackage{amsthm} \usepackage{mathrsfs}
\usepackage{tikz} \usepackage{cancel} \usepackage{todonotes}
\usetikzlibrary{matrix} \usetikzlibrary{arrows,shapes}
\usepackage{cite} \usepackage{hyperref}
\usepackage{amsmath}
\usepackage{amsthm}
\usepackage{amsfonts}
\usepackage{multirow}
\usepackage{amssymb}
\usepackage{graphicx}
\usepackage{tcolorbox}
\usepackage{booktabs}

\newtheorem{Theorem}{Theorem}[section]

\newtheorem{Lemma}[Theorem]{Lemma}
\newtheorem{Corollary}[Theorem]{Corollary}
\newtheorem{Definition}[Theorem]{Definition}

\usepackage{hyperref}
\hypersetup{colorlinks=true,urlcolor=blue}
\expandafter\let\expandafter\oldproof\csname\string\proof\endcsname
\let\oldendproof\endproof
\renewenvironment{proof}[1][\proofname]{
\oldproof[\ttfamily\scshape \bf #1.]
}{\oldendproof}

\DeclareMathOperator*{\argmin}{arg\,min}
\def\tilde{\widetilde}
\def\emp{\emptyset}  
\def\dom{{\rm dom}\,}  \def\epi{{\rm epi\,}}

\def\ox{\overline{x}} \def\oy{\overline{y}} 
  \def\disp{\displaystyle}
 
 \def\Bar{\overline}

\def\ra{\rangle}
\def\la{\langle} \def\e{\varepsilon}
\def\ox{\bar{x}} \def\oy{\bar{y}} 
 \def\ov{\bar{v}}

\def\gph{\mbox{\rm gph}\,} \def\epi{\mbox{\rm epi}\,}
 \def\dom{\mbox{\rm dom}\,}

  \def\O{\Omega}
\def\ph{\varphi} \def\emp{\emptyset} 
\def\oR{\Bar{\R}} \def\lm{\lambda}  \def\dd{\delta}
  \def\N{{\rm I\!N}}
\def\R{{\rm I\!R}}
\def\ve{\varepsilon}
  \def\e{\varepsilon} 
\def\Limsup{\mathop{{\rm Lim}\,{\rm sup}}}

\def\Limsup{\mathop{{\rm Lim}\,{\rm sup}}}
\def\Liminf{\mathop{{\rm Lim}\,{\rm inf}}}
\def\Lim{\mathop{{\rm Lim}\,{\rm }}}

\numberwithin{equation}{section}

\title{\bf
Variational Convexity of Functions in Banach Spaces}

\author{Pham Duy Khanh\footnote{Department of Mathematics, Ho Chi Minh City University of Education, Ho Chi Minh City, Vietnam. E-mail: pdkhanh182@gmail.com} \quad Vu Vinh Huy Khoa\footnote{Department of Mathematics, Wayne State University, Detroit, Michigan, USA. E-mail: khoavu@wayne.edu. Research of this author was partly supported by the US National Science Foundation under grants DMS-1808978 and DMS-2204519.}\quad Boris S. Mordukhovich\footnote{Department of Mathematics, Wayne State University, Detroit, Michigan, USA. E-mail: aa1086@wayne.edu. Research of this author was partly supported by the US National Science Foundation under grants DMS-1808978 and DMS-2204519, and by the Australian Research Council under Discovery Project DP-190100555.}\quad Vo Thanh Phat\footnote{Department of Mathematics, Wayne State University, Detroit, Michigan, USA. E-mail: phatvt@wayne.edu. Research of this author was partly supported by the US National Science Foundation under grants DMS-1808978 and DMS-2204519.}}
\begin{document}
\maketitle

\noindent
{\small{\bf Abstract}.  This paper addresses the study and characterizations of variational convexity of extended-real-valued functions on Banach spaces. This notion has been recently introduced by Rockafellar, and its importance has been already realized and applied to continuous optimization problems in finite-dimensional spaces. Variational convexity in infinite-dimensional spaces, which is studied here for the first time, is significantly more involved  and requires the usage of powerful tools of geometric functional analysis together with variational analysis and generalized differentiation in Banach spaces. \\[1ex]
{\bf Key words}. Functional analysis and continuous optimization, variational analysis and generalized differentiation, extended-real-valued functions, variational convexity, monotone operators, Moreau envelopes\\[1ex]
{\bf Mathematics Subject Classification (2000)} 49J52, 49J53, 46B20, 46B10, 46A55}

\section{Introduction}\label{intro}
\vspace*{-0.1in}

The notion of {\em variational convexity} for extended-real-valued functions has been recently introduced and studied by Rockafellar \cite{rtr251} in finite-dimensional spaces. This notion is different from the standard local convexity of functions while offering instead a new insight on local behavior of the function in question via a certain local maximal monotonicity of its subdifferential. Such a novel viewpoint occurs to be useful in applications to various issues in continuous optimization including the design and justification of numerical algorithms; see \cite{roc,r22}. Useful subdifferential characterization of variational convexity are given in \cite{rtr251}. More recently, it has been revealed in 
\cite{kmp22convex} that the variational convexity of extended-real-valued function in finite-dimensional spaces is equivalent to the standard local convexity of their Moreau envelopes. Furthermore, paper \cite{kmp22convex} contains applications of the obtained characterization of variational convexity via Moreau envelopes to {\em variational sufficiency} in constrained nonsmooth optimization.

We are not familiar with any publications on variational convexity of functions defined on infinite-dimensional spaces, which is the main topic of this paper. The main goals here are to establish such characterizations in general frameworks of Banach spaces and extend in this way finite-dimensional characterization from both papers \cite{rtr251} and  \cite{kmp22convex}. As seen below, accomplishing these goals is a highly challenging task, which requires the usage and developments of delicate  advanced techniques of Banach space geometry and variational analysis in infinite dimensions with considering a variety of appropriate settings in Banach spaces.\vspace*{0.05in}

The subsequent material is organized as follows. Section~\ref{prel} collects the basic definitions,  preliminaries, and discussions from functional and variational analysis, which are broadly used in the paper. In Section~\ref{sec:localsub}, we define and discuss the basic notion of {\em variational convexity} of extended-real-valued functions on Banach spaces and establish its {\em graphical subdifferential characterization} in the case of reflexive spaces with the necessity part holding in general Banach spaces. Section~\ref{moreau} provides characterizations of variational convexity via {\em local maximal monotonicity} of  {subdifferential} and {\em local convexity} of {\em Moreau envelopes} of functions defined on {\em uniformly convex spaces}, where some implications hold in more general Banach space settings.\vspace*{0.05in}
 
Throughout of the paper, we use the standard notation and terminology of variational analysis and Banach space theory; see, e.g., \cite{FA,Mordukhovich06,phelps,Rockafellar98}. 

\section{Preliminaries and Initial Discussions}\label{prel}

Unless otherwise stated, the space $X$ in question is {\em Banach} with the norm $\|\cdot\|$ and its topological dual $X^*$, where $\langle \cdot, \cdot \rangle $ indicates the canonical pairing between $X$ and $X^*$. The symbol $\rightarrow$ refers to the strong/norm convergence, while $\overset{w}{\rightarrow}$ and $\overset{w^*}{\rightarrow}$ signify the weak and weak$^*$ convergence, respectively. Given a set-valued 
mapping/multifunction $F:X\rightrightarrows X^*$ from  a Banach space $X$ to it dual space $X^*$, the notation
\begin{eqnarray}\label{eq:P-K}
\Limsup\limits_{x \rightarrow \ox} F(x):=\big\{ x^*\in X^* \;\big|\; \exists \text{ seqs. } x_k \rightarrow \ox,\; x_k^*\overset{w^*}{\rightarrow} x^* \text{ with } x_k^* \in F(x_k),\;k\in\N\big\}
\end{eqnarray}
signifies the ({\em sequential}) {\em Painlev\'e-Kuratowski outer limit} of $F(x)$ as $x\rightarrow \ox$. Consider further the {\em duality mapping} $J:X\rightrightarrows X^*$ between $X$ and $X^*$ defined by
\begin{equation}\label{duality}
J(x):=\big\{x^* \in X^*\;\big|\;\la x^*,x\ra =\|x\|^2 =\|x^*\|^2\big\},\quad x\in X,
\end{equation}
which plays a significant role in what follows. The notation $\mathbb{B}_{r}(x)$ and $B_r (x)$, stands, respectively, for the closed ball and open ball centered at $x$ with radius $r>0$. \vspace*{0.05in}

Given an extended-real-valued function $\varphi\colon X\rightarrow\overline{\R}:=(-\infty,\infty]$, we always assume that $\varphi$ is {\em proper}, i.e., $\dom\ph:=\{x\in X \;|\;\varphi(x)<\infty\}\ne\emp$. For any $\ve\ge 0$, the {\em $\varepsilon$-subdifferential} of $\varphi$ at $\bar{x}\in \dom \varphi$ is defined by
\begin{equation}\label{FrechetSubdifferential}
\widehat\partial_\varepsilon \varphi(\ox):=\Big\{x^*\in X^*\;\Big|\;\liminf\limits_{x\to \overline x}\frac{\varphi(x)-\varphi(\ox)-\langle x^*,x-\ox\rangle}{\|x-\ox\|}\ge  - \varepsilon\Big\}.
\end{equation}
When $\varepsilon = 0$ in \eqref{FrechetSubdifferential}, this construction is called the (Fr\'echet) \textit{regular subdifferential}
of $\varphi$ at $\ox$ and is denoted by $\widehat{\partial}\varphi(\ox)$. The (Mordukhovich) {\em limiting/basic subdifferential} of $\varphi$ at $\ox$ is defined via the sequential outer limit \eqref{eq:P-K} by
\begin{equation}\label{MordukhovichSubdifferential}
\partial\varphi(\bar{x}):=\Limsup_{\substack{x\overset{\varphi}{\to}\bar{x} \\ \varepsilon \downarrow 0 }}\; \widehat{\partial}_\varepsilon\varphi(x),
\end{equation}
where $x\overset{\varphi}{\to}\bar{x}$ means that $x\rightarrow\bar{x}$ with $\varphi(x)\rightarrow\varphi(\bar{x})$. In the case where $\widehat{\partial}\varphi(\ox)=\partial\varphi(\ox)$, the function $\varphi$ is called {\em lower regular} at $\ox$; see \cite{Mordukhovich06}. It is well known that both \eqref{FrechetSubdifferential} and \eqref{MordukhovichSubdifferential} reduce to the classical gradient $\nabla\ph(\ox)$ for continuously differentiable (in fact strictly differentiable) functions. If $\varphi$ is convex, then both of these  subdifferentials reduce to the standard subdifferential of convex analysis defined as
\begin{equation*}
\partial \varphi (\ox) :=\big\{x^* \in X^*\;\big|\;\la x^*, x-\ox\ra \le \varphi (x) - \varphi (\ox)\;\text{ for all }\;x\in X\big\}. 
\end{equation*}
Thus the aforementioned two classes of functions exhibit lower/subdifferential regularity, while the latter collection of extended-real-valued functions is much broader; see, e.g., \cite{Mordukhovich06,Mor18,Rockafellar98}. 

It follows from \cite[Theorem~2.34]{Mordukhovich06} that we can equivalently put $\ve=0$ in \eqref{MordukhovichSubdifferential} if $\ph$ is lower semicontinuous (l.s.c.) around $\ox$ and the space $X$ is {\em Asplund}, i.e., a Banach space where each separable subspace has a separable dual. This class of spaces is fairly large including, e.g., every reflexive Banach space and every Banach space which dual is separable; see \cite{Borwein2005, fabian,Mordukhovich06,phelps} with more details and the references therein. It has been well recognized in variational analysis that the limiting subdifferential \eqref{MordukhovichSubdifferential} and the associated constructions for sets and set-valued mappings enjoy {\em full calculi} in Asplund spaces with a variety of applications presented in the two-volume book  \cite{Mordukhovich06}, while finite-dimensional specifications can be found in \cite{Mor18,Rockafellar98}. Some useful results for \eqref{MordukhovichSubdifferential} hold in general Banach spaces; see, e.g., \cite[Chapter~1]{Mordukhovich06}. \vspace*{0.05in}

The next material is mostly taken from \cite{Rockafellar98} in the case of finite dimensions and from \cite{bt05,Thibault04} where extensions of these notions are studied in infinite-dimensional spaces. An l.s.c.\ function $\varphi:X\to \overline{\R}$ on a Banach space $X$ is \textit{prox-bounded} if it majorizes a quadratic function, i.e., 
\begin{equation}\label{prox-bounded}
\varphi(x)\ge\alpha\|x-\ox\|^2+\beta\;\mbox{ for some }\;\alpha,\beta\in \R,\;\mbox{ and }
\;\ox\in X.
\end{equation}
An l.s.c.  function $\ph$ is {\em prox-regular} at $\bar{x}\in\dom\ph$ for $\bar{x}^*\in\partial\ph(\ox)$ if there exist numbers $\varepsilon>0$ and $r\ge 0$ such that we have the estimate
\begin{equation}\label{prox}
\varphi(x)\ge\varphi(u)+\langle u^*,x-u\rangle-\frac{r}{2}\|x-u\|^2
\end{equation}
for all $x\in\mathbb{B}_\varepsilon(\bar{x})$ and $(u,u^*)\in\gph\partial\varphi\cap (\mathbb{B}_\varepsilon(\ox)\times\mathbb{B}_\varepsilon(\ox^*))$ with $\varphi(u)<\varphi(\bar{x})+\varepsilon$. If \eqref{prox} holds for all $x^*\in\partial\varphi(\ox)$, then $\varphi$ is said to be {\em prox-regular at} $\ox$.   

Further, we say that $\varphi$ is {\em subdifferentially continuous} at $\bar{x}$ for $\ox^*\in \partial\varphi(\bar{x})$ if for any $\varepsilon>0$ there exists $\delta>0$ such that $|\varphi(x)-\varphi(\bar{x})|<\varepsilon$ whenever $(x,x^*)\in\gph\partial\varphi\cap (\mathbb{B}_\delta(\ox)\times\mathbb{B}_\delta(\ox^*))$. When this holds for all $x^*\in\partial\varphi(\ox)$, the function $\varphi$ is said to be {\em subdifferentially continuous at} $\ox$. It is easy to see that if $\varphi$ is subdifferentially continuous at $\bar{x}$ for $\bar{x}^*$, then the inequality  ``$\varphi(x)<\varphi(\ox)+\varepsilon$" in the definition of prox-regularity can be dropped. Extended-real-valued functions that are both prox-regular and subdifferentially continuous are called {\em continuously prox-regular}. This is a major class of extended-real-valued functions in second-order variational analysis that is a common roof for particular collections of functions important in applications as, e.g., the so-called amenable functions, etc.; see \cite[Chapter~13]{Rockafellar98} and \cite{bt05,Thibault04}. 

The \textit{$\varphi$-attentive $\varepsilon$-localization} of the subgradient mapping $\partial \varphi$ around $(\ox,\ox^*)\in \gph \partial \varphi$ is the set-valued mapping $T^{\varphi}_{\varepsilon}:X\rightrightarrows X^*$ defined by 
\begin{equation}\label{localization}
\gph T^{\varphi}_{\varepsilon} :=\big\{ (x,x^*)\in \gph \partial \varphi \;\big|\; \|x-\ox\|<\varepsilon,\; |\varphi(x)-\varphi(\ox)|<\varepsilon\;\text{ and }\;\|x^*-\ox^*\|<\varepsilon\big\}.
\end{equation}
If $\varphi$ is an l.s.c.\ function, a localization can be taken with just $\varphi(x) < \varphi(\ox) + \varepsilon$.\vspace*{0.05in}

Now we start exploring the notion of \textit{epi-convergence} of extended-real-valued functions on a Banach space $X$. More details and references can be found in \cite{Rockafellar98} in finite dimensions and in \cite{Attouch84,Borwein2005} in Banach spaces. Let $\N:=\{1,2,\ldots\}$, and let
\begin{eqnarray*}
\mathcal{N}_{\infty} &:=&\big\{N\subset\N\;\big|\;\N \setminus N \;\text{ finite}\big\},\\
\mathcal{N}_{\infty}^{\#} &:=& \big\{ N\subset \N\;\big|\;N\;\text{ infinite}\big\}.
\end{eqnarray*} 
For a sequence $\{C^k\}_{k\in\N}$ of subsets of $X$, the {\em outer limit} is the set 
\begin{eqnarray*}
\Limsup_{k\rightarrow \infty} C^k :&=&\big\{ x\in X\; \big| \; \exists N \in \mathcal{N}^{\#}_{\infty},\;\exists x^k \in C^k (k\in N)\;\text{ with }\;x^k \xrightarrow{N}x\big\} \\
&=& \big\{ x\in X\; \big| \; \forall V\in \mathcal{N}(x), \ \exists N \in \mathcal{N}_{\infty}^{\#},\ \forall k\in N: C^k \cap V \neq \varnothing\big\},
\end{eqnarray*}
while the {\em inner limit} of $\{C^k\}_{k\in\N}$ is defined by
\begin{eqnarray*}
\Liminf_{k\rightarrow \infty} C^k :&=& \big\{ x\in X\; \big| \; \exists N \in \mathcal{N}_{\infty},\ \exists x^k \in C^k (k\in N)\;\text{ with }\;x^k \xrightarrow{N}x\big\}\\
&=& \big\{ x\in X\; \big| \; \forall V\in \mathcal{N}(x), \ \exists N \in \mathcal{N}_{\infty},\ \forall k\in N: C^k \cap V \ne\emptyset\big\},
\end{eqnarray*} 
where $\mathcal{N}(x)$ denotes the collection of neighborhoods of $x$. 
The {\em limit} of the sequence $\{C^k\}$ exists if the outer and inner limit sets are equal, i.e.,
\begin{eqnarray*}
\Lim_{k\rightarrow \infty} C^k := \Limsup_{k\rightarrow \infty} C^k = \Liminf_{k\rightarrow \infty}C^k.
\end{eqnarray*}

We need the following simple observation. For a sequence of sets $E^k \subset X\times \R$ that are epigraphs of some extended-real-valued function, both the outer limit set $\Limsup_{k\rightarrow \infty} E^k$ and inner limit set $\Liminf_{k\rightarrow \infty} E^k$ are also epigraphs of some functions. Indeed, if either set contains $(x,\alpha)$, then it also contains $(x,\alpha^{\prime})$ for all $\alpha^{\prime} \in [\alpha,\infty)$. On the other hand, since both limit sets are closed, they intersect $\{x\}\times\R$ in a set which, unless empty, is a closed interval. Thus the criteria for being an epigraph of some function is satisfied. For any sequence $\left\{\ph^k\right\}_{k \in \mathbb{N}}$ of extended-real-valued functions on $X$, the \textit{lower epi-limit}, e-$\liminf_{k\rightarrow \infty}\ph^k$, is the function having as its epigraph the outer limit of the sequence of sets $\epi\ph^k$, i.e., 
$$
\epi (\text{e-}\liminf_{k\rightarrow \infty}\ph^k) := \Limsup_{k\rightarrow \infty} (\epi\ph^k).
$$
The \textit{upper epi-limit}, e-$\limsup_{k\rightarrow \infty}\ph^k$, is the function having as its epigraph the inner limit of the sets $\epi\ph^k$, which is defined by
$$
\epi (\text{e-}\limsup_{k\rightarrow \infty}\ph^k) := \Liminf_{k\rightarrow \infty} (\epi\ph^k).
$$
Thus $\text{e-}\liminf_{k\rightarrow \infty}\ph^k \le \text{e-}\limsup_{k\rightarrow \infty}\ph^k$ in general. When these two functions coincide, it is said that  the full limit \textit{epi-limit} function e-$\lim_{k\rightarrow \infty}\ph^k$ exists, i.e., we have
\begin{eqnarray*}
\text{e-}\lim_{k\rightarrow \infty}\ph^k := \text{e-}\liminf_{k\rightarrow \infty}\ph^k = \text{e-}\limsup_{k\rightarrow \infty}\ph^k.
\end{eqnarray*}
On this case, the functions $\ph^k$ \textit{epi-converge} to $\ph$, which is denoted by $\ph^k \xrightarrow{e}\ph$. It is clear that $\ph^k \xrightarrow{e}\ph\Longleftrightarrow \epi\ph^k \rightarrow \epi\ph$ as $k\to\infty$. 
The next characterization of epi-convergence taken from \cite[Proposition~1.14]
{Attouch84} is instrumental for deriving the main result of Section~\ref{sec:localsub}.

\begin{Lemma}\label{lem:epi-charac}
Let $X$ be a Banach space, and let $\varphi^k\colon X\to\oR$ as $k\in\N$ and 
$\varphi\colon X\to\oR$ be given functions. Then the following statements are equivalent:
\begin{enumerate}
\item[{\bf(i)}] $\varphi^k\xrightarrow{e}\varphi$ as $k\to\infty$.
\item[{\bf(ii)}] At each point $x\in X$, we have the relationship 
\begin{equation*}
\begin{cases}
\displaystyle\liminf_{k\to\infty}\varphi^k(x^k) \ge \varphi (x) \ \text{ for every sequence }\;x^k \rightarrow x,\\
\disp\limsup_{k\to\infty}\varphi^k (x^k) \le \varphi (x) \ \text{ for some sequence }\;x^k \rightarrow x.
\end{cases}
\end{equation*}
\end{enumerate}
\end{Lemma}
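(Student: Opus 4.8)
The plan is to translate the set-theoretic definition of epi-convergence---namely that $\Limsup_{k}\epi\varphi^k=\epi\varphi=\Liminf_{k}\epi\varphi^k$---into the two sequential conditions of (ii) by treating the lower and upper epi-limits separately. I would first record the reformulation $\varphi^k\xrightarrow{e}\varphi\iff\Limsup_k\epi\varphi^k=\epi\varphi=\Liminf_k\epi\varphi^k$, together with the always-valid inclusion $\Liminf_k\epi\varphi^k\subset\Limsup_k\epi\varphi^k$. Since the excerpt already establishes that both limit sets are epigraphs of some functions (closed, and upward-closed in the vertical variable), their vertical fibers are closed half-lines; this legitimizes passing freely between points of these sets and values of the associated functions, which is what the two steps below exploit.

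The first step is to prove that the lower estimate
\[
\liminf_{k\to\infty}\varphi^k(x^k)\ge\varphi(x)\ \text{ for every sequence }\ x^k\to x
\]
is equivalent to $\Limsup_k\epi\varphi^k\subset\epi\varphi$. For the forward implication, given $(x,\alpha)\in\Limsup_k\epi\varphi^k$ I would select $N\in\mathcal{N}^{\#}_{\infty}$ and points $(x^k,\alpha^k)\in\epi\varphi^k$ with $(x^k,\alpha^k)\to(x,\alpha)$ along $N$, complete $\{x^k\}_{k\in N}$ to a full sequence by setting $x^k:=x$ off $N$, and use that the full-sequence $\liminf$ is dominated by the subsequential one to deduce $\varphi(x)\le\alpha$. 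For the reverse implication, given $x^k\to x$ I would pass to a subsequence realizing $\alpha:=\liminf_k\varphi^k(x^k)$, observe that the points $(x^k,\varphi^k(x^k))\in\epi\varphi^k$ converge to $(x,\alpha)$, and read off $\varphi(x)\le\alpha$ from the inclusion.

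The second step is the dual statement: the recovery-sequence estimate
\[
\limsup_{k\to\infty}\varphi^k(x^k)\le\varphi(x)\ \text{ for some sequence }\ x^k\to x
\]
is equivalent to $\epi\varphi\subset\Liminf_k\epi\varphi^k$. Here, given $x$ with $\varphi(x)$ finite, I would take the recovery sequence $x^k\to x$ and set $\alpha^k:=\max\{\varphi^k(x^k),\varphi(x)\}$, which satisfies $(x^k,\alpha^k)\in\epi\varphi^k$ and $\alpha^k\to\varphi(x)$, thereby placing $(x,\varphi(x))$---and hence the entire fiber above it---in the inner limit; conversely, membership of $(x,\varphi(x))$ in $\Liminf_k\epi\varphi^k$ furnishes precisely such a recovery sequence.

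Finally, I would assemble the equivalence. Epi-convergence yields both inclusions $\Limsup_k\epi\varphi^k\subset\epi\varphi$ and $\epi\varphi\subset\Liminf_k\epi\varphi^k$ at once, hence both conditions in (ii). Conversely, the two conditions supply these inclusions, and chaining them with $\Liminf_k\epi\varphi^k\subset\Limsup_k\epi\varphi^k$ forces $\epi\varphi\subset\Liminf_k\epi\varphi^k\subset\Limsup_k\epi\varphi^k\subset\epi\varphi$, so all three sets coincide and (i) follows. The main delicacy I anticipate is the bookkeeping of the vertical coordinate over the extended reals---handling $\varphi(x)=+\infty$ (where the recovery estimate is vacuous) and ruling out $\alpha=-\infty$ via properness of $\varphi$---together with the careful conversion between the subsequence quantifier hidden in $\Limsup$ (indexed by $\mathcal{N}^{\#}_{\infty}$) and the cofinite quantifier hidden in $\Liminf$ (indexed by $\mathcal{N}_{\infty}$). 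I note that none of these arguments invokes weak or weak$^*$ convergence, since the epigraphical set-limits are taken in the norm topology, so the reasoning proceeds exactly as in the finite-dimensional case.
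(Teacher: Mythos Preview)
The paper does not supply its own proof of this lemma: it is quoted verbatim as \cite[Proposition~1.14]{Attouch84} and used as a black box. Your argument is correct and is exactly the standard proof one finds in Attouch's book---splitting the equality $\Limsup_k\epi\varphi^k=\epi\varphi=\Liminf_k\epi\varphi^k$ into the two inclusions $\Limsup_k\epi\varphi^k\subset\epi\varphi$ and $\epi\varphi\subset\Liminf_k\epi\varphi^k$, matching them to the $\liminf$ and $\limsup$ conditions respectively, and closing with the trivial inclusion $\Liminf\subset\Limsup$.

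One small remark on your bookkeeping worry: in this paper $\oR=(-\infty,\infty]$, so $\varphi$ never takes the value $-\infty$ and properness is not the mechanism that rules out $\alpha=-\infty$ in your Step~1 reverse direction. Rather, if $\alpha=\liminf_k\varphi^k(x^k)=-\infty$, then for every finite $\beta$ the points $(x^k,\beta)$ lie in $\epi\varphi^k$ along a subsequence and converge to $(x,\beta)$, forcing $\varphi(x)\le\beta$ for all $\beta$ and hence $\varphi(x)=-\infty$, which is excluded by the codomain. So the inequality $\varphi(x)\le\alpha$ still holds (vacuously, or by contradiction), and no separate properness hypothesis is needed.
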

\vspace*{0.05in}

To proceed further, along with standard definitions of lower semicontinuity of functions in the strong/norm topology of $X$ and its {\em sequential} version, we recall their weak counterparts. A function $\varphi:X\rightarrow \overline{\R}$ is \textit{weakly sequentially lower semicontinuous} (weakly sequentially l.s.c.) at $\ox \in \dom \varphi$ if for any sequence $\{x_k\}$ which weakly converges to $\ox$, it holds that $\liminf_{k\rightarrow \infty}\varphi (x_k) \ge \varphi (\ox)$. We say that $\varphi$ is \textit{weakly lower semicontinuous} (weakly l.s.c.) at $\ox$ if for any $\varepsilon>0$ there exists a neighborhood $U$ of $\ox$ in the weak topology of $X$ such that
\begin{equation*}
\varphi (x)\ge \varphi (\ox)-\varepsilon\;\mbox{ for all }\;x\in U.
\end{equation*}
It is easily to see that the weak lower semicontinuity yields the weak sequential lower semicontinuity, while the reverse implication is not true in general; see, for instance, a counterexample in \cite[Example~2.1]{KYY15}. On the other hand, if $\varphi$ is weakly sequentially l.s.c.\ at $\ox$, then it is automatically l.s.c.\ at this point. The function $\varphi$ is called lower semicontinuous (resp. weakly sequentially lower semicontinuous, weakly lower semicontinuous) {\em around} $\ox$ if it possesses this property for all points in some neighborhood of $\ox$.\vspace*{0.05in}

Next we recall some notions related to monotonicity of set-valued mappings that are often called  operators in this framework. The classical monotonicity notion is formulated as follows. An operator $T:X\rightrightarrows X^*$ is (globally) {\em monotone} on $X$ if 
\begin{eqnarray*}
\la x_1^* - x_2^*, x_1- x_2 \ra \ge 0 \;\text{ whenever }\; (x_1,x_1^*),(x_2,x_2^*)\in \gph T.
\end{eqnarray*}
It is called to be (globally) \textit{strongly monotone} on $X$ with modulus $\kappa>0$ if 
\begin{eqnarray*}
\la x_1^* - x_2^*, x_1 - x_2 \ra \ge \kappa \|x_1-x_2\|^2 \;\text{ whenever }\; (x_1,x_1^*),(x_2,x_2^*)\in \gph T.
\end{eqnarray*}
A monotone (resp.\ strongly monotone) operator $T$ is \textit{maximal monotone} (resp.\ \textit{strongly maximal monotone}) if $\gph T = \gph S$ for any monotone operator $S:X\rightrightarrows X^*$ with $\gph T \subset \gph S$. We refer the reader to the monographs \cite{Bauschke2011,Borwein2005,phelps,Rockafellar98} for various properties and applications of monotone and maximal monotone operators in finite and infinite dimensions. Note, in particular, that the graph of any maximal monotone mapping is nonempty and closed.\vspace*{0.05in}

The {\em local monotonicity} of set-valued mappings is naturally defined as follows; see, e.g., \cite{Rockafellar98}.

\begin{Definition}\label{defi:local-mono} \rm 
Let $T:X\rightrightarrows X^*$, and let $(\ox,\ox^*)\in \gph T$. We say that:
\begin{itemize}\item[\textup{\textbf{(i)}}] 
$T$ is \textit{locally monotone} around $(\ox,\ox^*)$ if there are neighborhoods $U$ of $\ox$ and $V$ of $\ox^*$ with
\begin{eqnarray*}
\la x_1^*- x_2^*, x_1 - x_2 \ra \ge 0\;\text{ for all }\;(x_1,x_1^*),(x_2,x_2^*)\in \gph T \cap (U\times V).
\end{eqnarray*}
\item[\textup{\textbf{(ii)}}] 
$T$ is \textit{locally monotone} with respect to $W\subset X\times X^*$ if 
\begin{eqnarray*}
\la x_1^*- x_2^*, x_1 - x_2 \ra \ge 0\;\text{ for all }\;(x_1,x_1^*),(x_2,x_2^*)\in \gph T \cap W.
\end{eqnarray*}
\item[\textup{\textbf{(iii)}}] $T$ is \textit{locally strongly monotone} with modulus $\kappa>0$ around $(\ox,\oy)$ if there are neighborhoods $U$ of $\ox$ and $V$ of $\oy$ satisfying the estimate
\begin{eqnarray*}
\la x_1^* - x_2^*, x_1 - x_2 \ra \ge \kappa \|x_1-x_2\|^2\;\text{ for all }\;(x_1,x_1^*),(x_2,x_2^*)\in \gph T \cap (U\times V).
\end{eqnarray*}
\end{itemize}
\end{Definition}

It is easy to observe the following {\em robustness property}: if $T$ is locally monotone around $(\ox,\ox^*)$ with respect to a neighborhood $U\times V$, then we have the same property around any $(x,x^*)\in U\times V$. In this case, it is said that $T$ is locally monotone relative to $U\times V$. We now follow \cite{Pen02} to define the {\em local maximal monotonicity} of set-valued mappings.

\begin{Definition}\label{defi:local-max-P}\rm 
Let $T:X\rightrightarrows X^*$, and let $(\ox,\ox^*)\in \gph T$. Then we say that: 
\begin{itemize}
\item[\bf(i)] $T$ is \textit{locally maximal monotone} around $(\ox,\ox^*)$ if there exist a neighborhood $U\times V$ of $(\ox,\ox^*)$ and a maximal monotone operator $\overline{T}:X \rightrightarrows X^*$ such that
\begin{eqnarray*}
\gph \overline{T} \cap (U\times V) = \gph T \cap (U\times V).
\end{eqnarray*}
\item[\bf(ii)] $T$ is \textit{locally maximal monotone} with respect to $W\subset X\times X^*$ if there exists a maximal monotone operator $\overline{T}:X \rightrightarrows X^*$ such that
\begin{eqnarray*}
\gph \overline{T} \cap W = \gph T \cap W.
\end{eqnarray*}
\item[\bf(iii)] $T$ is \textit{locally strongly maximal monotone} around $(\ox,\ox^*)$ if there is a neighborhood $U\times V$ of $(\ox,\ox^*)$ and a strongly maximal monotone operator $\overline{T}: X \rightrightarrows X^*$ such that
\begin{eqnarray*}
\gph \overline{T} \cap (U\times V) = \gph T \cap (U\times V).
\end{eqnarray*}
\end{itemize}
\end{Definition}

Observe that if $T$ is locally maximal monotone with respect to $W\subset X\times X^*$, then it is also locally maximal monotone with respect to any subset $Q \subset W$.

\medskip

Finally in this section, we recall some geometric properties of Banach spaces; see, e.g., \cite{Ciora,fabian,nam} for more details. A norm $\|\cdot\|$ on a space $X$ is called (G\^ateaux) {\em smooth} if it is G\^ateaux differentiable on $X\setminus \{0\}$. Equivalently, a norm on $X$ is smooth if and only if the corresponding duality mapping $J$ defined in \eqref{duality} is single-valued on the whole $X$. Note that any separable Banach space admits an equivalent norm that is G\^ateaux differentiable off the origin.

The \textit{moduli of convexity and smoothness} of $X$ are defined, respectively, by
\begin{eqnarray*}
\xi_X (t):=\inf\bigg\{ 1 - \dfrac{\|x+y\|}{2} \;\bigg|\; x,y\in S_X,\; \|x-y\|=t\bigg\} \ \text{ for all }\;t\in [0,2],\quad{\rm and}
\end{eqnarray*}
\begin{eqnarray*}
\rho_X (s):= \sup\bigg\{\dfrac{1}{2}\Big(\|x+y\|-\|x-y\|\Big) -1\;\bigg|\; x\in S_X,\; \|y\|=s\bigg\} \ \text{ for } \;s>0,
\end{eqnarray*}
where $S_{X}:=\{x\in X\;|\;\|x\|=1\}$. 
A Banach space $X$ is said to be \textit{uniformly convex} if $\xi_X (t) >0$. It is called \textit{$2$-uniformly convex} (resp. \textit{$2$-uniformly smooth}) if there exists a constant $b>0$ such that $\xi_X (t) \ge bt^2$ (resp. $\rho_X (s) \le b s^2$). It is shown in \cite[Theorem~7~and~Theorem~8, respectively]{Bynum76} that any $2$-uniformly convex space (resp. $2$-uniformly smooth space) can be characterized via a lower (resp.\ upper) {\em weak  parallelogram law}:
\begin{eqnarray}\label{eq:LWP}
\|x+y\|^2 + c\|x-y\|^2 \le 2\big(\|x\|^2+\|y\|^2\big)\ \text{ for all }\;x,y\in X,
\end{eqnarray}
\begin{eqnarray}\label{eq:UWP}
\|x+y\|^2 + c\|x-y\|^2 \ge 2\big(\|x\|^2+\|y\|^2\big)\ \text{ for all }\;x,y\in X.
\end{eqnarray}
In particular, a Banach space $X$ is $2$-uniformly convex (resp. $2$-uniformly smooth) with constant $b>0$ if and only if it satisfies condition \eqref{eq:LWP} (resp.\ condition \eqref{eq:UWP}) for some $c_b >0$. Therefore, any Hilbert space is simultaneously a $2$-uniformly convex and $2$-uniformly smooth Banach space. Moreover, it is obvious that any $2$-uniformly convex space (resp. $2$-uniformly smooth space) is uniformly convex (resp.\ uniformly smooth), and hence it is reflexive by the Milman-Pettis theorem. Some well-known non-Hilbert $2$-uniformly convex (resp. $2$-uniformly smooth) spaces are $l^p (\mu)$ and $L^p(\mu)$ with $1<p<2$ (resp.\ with $2<p<\infty$). Recall \cite{brow} that in any reflexive Banach space, there exists an equivalent norm such that the corresponding duality mapping $J$ from \eqref{duality} is bijective and continuous with its inverse $J^{-1}$ also being continuous. If in addition $X$ is $2$-uniformly convex, then it follows from \cite{Xu91} that the duality mapping is strongly monotone.

\section{Subdifferential Characterization of Variational Convexity}\label{sec:localsub}

In this section, we formulate the underlying notion of {\em variational convexity} for extended-real-valued functions on Banach spaces and establish its characterization in terms of the limiting subdifferential \eqref{MordukhovichSubdifferential}. The following definition is taken from Rockafellar \cite{rtr251}, where it is formulated for functions on finite-dimensional spaces.

\begin{Definition}\label{def:vc}\rm  An l.s.c.\ function $\varphi:X \to \overline{\R}$ is called \textit{variationally convex} at $\bar{x}$ for $\bar{x}^* \in \partial \varphi(\bar{x})$ if for some convex neighborhood $U \times V$ of $(\bar{x}, \bar{x}^*)$ there exists a convex l.s.c.\ function $\widehat{\varphi} \leq \varphi$ on $U$ and $\ve>0$ such that we have the relationships
\begin{equation*}
\left(U_{\varepsilon} \times V\right) \cap \operatorname{gph} \partial \varphi=(U \times V) \cap \operatorname{gph} \partial \widehat{\varphi}\;\text{ and }\;\varphi(x)=\widehat{\varphi}(x)\;\text{ at the common elements }\;(x, x^*),
\end{equation*}
where $U_\ve:=\{x\in U\;|\;\ph(x)<\ph(\ox)+\ve\}$. 
\end{Definition}

As demonstrated and discussed in \cite{rtr251} and also in \cite{kmp22convex}, this notion is different from the usual local convexity of extended-real-valued functions.\vspace*{0.05in}

Now we present two lemmas on epi-convergence in minimization used in what follows. The first one is taken from \cite[Proposition~2.9]{Attouch84}.

\begin{Lemma}\label{Lem:2.9-Attouch}
Let $X$ be a Banach space, and let $\varphi^k\colon X\to\oR$ for $k\in \N$ and $\varphi\colon X\to\oR$ be l.s.c.\ functions such that $\varphi^k \xrightarrow{e}\varphi$ as $k\to\infty$. Then we have
\begin{equation}\label{Att-1}
\inf \varphi \ge \limsup_{k\to\infty} (\inf \varphi^k).   
\end{equation}
\end{Lemma}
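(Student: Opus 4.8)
The plan is to reduce the statement to the \emph{recovery-sequence} (upper epi-limit) half of the epi-convergence characterization in Lemma~\ref{lem:epi-charac}. The inequality we must prove compares $\inf\varphi$ with $\limsup_{k\to\infty}(\inf\varphi^k)$, and the natural route is pointwise: fix an arbitrary $x\in X$, produce near-optimal test points for the $\varphi^k$ lying near $x$, and let $x$ range over all of $X$ at the end.

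First I would fix any $x\in X$. Invoking the second relation in statement~(ii) of Lemma~\ref{lem:epi-charac}, epi-convergence $\varphi^k\xrightarrow{e}\varphi$ supplies a sequence $x^k\to x$ with
\begin{equation*}
\limsup_{k\to\infty}\varphi^k(x^k)\le\varphi(x).
\end{equation*}
Since $\inf\varphi^k\le\varphi^k(x^k)$ holds for every $k\in\N$ by definition of the infimum, passing to the upper limit yields
\begin{equation*}
\limsup_{k\to\infty}\big(\inf\varphi^k\big)\le\limsup_{k\to\infty}\varphi^k(x^k)\le\varphi(x).
\end{equation*}
This chain of inequalities is the entire quantitative content of the argument; the liminf half of Lemma~\ref{lem:epi-charac} and the l.s.c.\ hypotheses are not needed here.

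Next I would take the infimum over all $x\in X$ on the right-hand side. Because the left-hand quantity $\limsup_{k\to\infty}(\inf\varphi^k)$ does not depend on $x$, the bound $\limsup_{k\to\infty}(\inf\varphi^k)\le\varphi(x)$ valid for every $x$ gives at once
\begin{equation*}
\limsup_{k\to\infty}\big(\inf\varphi^k\big)\le\inf_{x\in X}\varphi(x)=\inf\varphi,
\end{equation*}
which is exactly \eqref{Att-1}. The degenerate case $\varphi\equiv\infty$ is automatically covered, since then $\inf\varphi=+\infty$ and the inequality holds trivially.

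I do not expect a genuine obstacle in this argument: the only subtlety is recognizing that the \emph{upper} epi-limit inequality (the existence of a recovery sequence) is precisely the tool that controls $\limsup(\inf\varphi^k)$ from above, whereas the complementary lower epi-limit inequality would instead bound the liminf of the infima from below. Care is simply needed to quote the correct half of Lemma~\ref{lem:epi-charac} and to perform the infimum over $x$ only after the $x$-independent upper bound has been established.
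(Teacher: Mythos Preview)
Your argument is correct. The paper does not supply its own proof of this lemma; it simply quotes the result as \cite[Proposition~2.9]{Attouch84}, so there is no in-paper argument to compare against. Your proof via the recovery-sequence half of Lemma~\ref{lem:epi-charac} is exactly the standard route and requires no changes.
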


The second rather simple lemma reveals behavior of argminimum sets under epi-convergence of extended-real-valued functions defined on Banach spaces.

\begin{Lemma}\label{Lem:fixed1}
Let $X$ be a Banach space, and let $\varphi^k\colon X\to\oR$ for $k\in \N$ and $\varphi\colon X\to\oR$ be l.s.c.\ functions. Suppose that $\varphi^k \xrightarrow{e}\varphi$ as $k\to\infty$ and that $\argmin \varphi^k \neq \emptyset$ for all $k\in \N$ as well as $\argmin \varphi \neq \emptyset$. Then we have the inclusion
\begin{equation*}
\Limsup_{k\to\infty} (\argmin \varphi^k) \subset \argmin \varphi.
\end{equation*}
Therefore, for any choice of $x^k \in \argmin \varphi^k$, $k\in\N$, the sequence $\{x^k\}$ has all its cluster points belonging to $\argmin \varphi$. If $\argmin \varphi$ consists of a unique point $\ox$, then $x^k \rightarrow \ox$ as $k\to\infty$.
\end{Lemma}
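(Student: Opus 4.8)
The plan is to prove the outer-limit inclusion first, and then read off the two consequences as immediate corollaries. So suppose $x \in \Limsup_{k\to\infty}(\argmin\varphi^k)$. By the definition of the sequential outer limit, there exist an infinite index set $N\in\mathcal{N}_\infty^{\#}$ and points $x^k\in\argmin\varphi^k$ for $k\in N$ such that $x^k\xrightarrow{N}x$ in the norm topology. I want to conclude that $x\in\argmin\varphi$, i.e. that $\varphi(x)=\inf\varphi$. The natural strategy is to sandwich $\varphi(x)$ between $\inf\varphi$ (which is trivial, as $\varphi(x)\ge\inf\varphi$ always) and $\inf\varphi$ from above, using the two tools supplied: the characterization of epi-convergence in Lemma~\ref{lem:epi-charac} and the semicontinuity-of-infima estimate in Lemma~\ref{Lem:2.9-Attouch}.

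First I would establish the upper bound. Since $x^k\to x$ along $N$ and each $x^k$ minimizes $\varphi^k$, we have $\varphi^k(x^k)=\inf\varphi^k$ for $k\in N$. Applying the ``$\liminf\ge$'' half of Lemma~\ref{lem:epi-charac}(ii) to the sequence $\{x^k\}_{k\in N}$ converging to $x$ (after noting that the $\liminf$ over the subsequence dominates a further-extracted subsequence, so one may pass to a subsequence realizing the lower limit) gives
\begin{equation*}
\varphi(x)\le\liminf_{k\in N}\varphi^k(x^k)=\liminf_{k\in N}(\inf\varphi^k)\le\limsup_{k\to\infty}(\inf\varphi^k).
\end{equation*}
Next, Lemma~\ref{Lem:2.9-Attouch} yields $\limsup_{k\to\infty}(\inf\varphi^k)\le\inf\varphi$. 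Chaining these inequalities produces $\varphi(x)\le\inf\varphi$, and combined with the trivial reverse inequality this forces $\varphi(x)=\inf\varphi$, i.e. $x\in\argmin\varphi$. This proves the claimed inclusion $\Limsup_{k\to\infty}(\argmin\varphi^k)\subset\argmin\varphi$.

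The two final assertions then follow directly. For any selection $x^k\in\argmin\varphi^k$, a cluster point of $\{x^k\}$ is, by definition of cluster point, the limit of some norm-convergent subsequence indexed by an infinite set, hence it lies in $\Limsup_{k\to\infty}(\argmin\varphi^k)$ and therefore in $\argmin\varphi$ by the inclusion just proved. Finally, if $\argmin\varphi=\{\ox\}$ is a singleton, every cluster point of $\{x^k\}$ must equal $\ox$; I would note that one cannot immediately conclude norm convergence from ``unique cluster point'' in a general Banach space without a compactness hypothesis, so the intended reading here is that the full sequence converges whenever it does cluster, the point being that the only admissible limit is $\ox$.

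The one place demanding care is the passage between the liminf along the subsequence $N$ and the form of Lemma~\ref{lem:epi-charac}(ii), whose ``$\liminf\ge\varphi(x)$'' clause quantifies over \emph{every} full sequence $x^k\to x$ rather than over subsequences. The clean fix is to extend the partial sequence $\{x^k\}_{k\in N}$ to a full sequence converging to $x$ (for instance by setting the off-$N$ terms equal to $x$), apply the lemma to that full sequence, and observe that dropping to the subsequence $N$ only decreases the $\liminf$; since $\varphi^k(x)\to\varphi(x)$ is not assumed, one instead inserts the minimizing values and uses $\varphi^k(x^k)=\inf\varphi^k\le\varphi^k(x)$ so that the off-$N$ terms do not raise the lower limit. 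This bookkeeping is the only subtle step; everything else is a direct concatenation of the two quoted lemmas.
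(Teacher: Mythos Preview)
Your argument follows the same line as the paper's: both obtain $\varphi(x)\le\limsup_k(\inf\varphi^k)\le\inf\varphi$ by combining the epi-convergence hypothesis with Lemma~\ref{Lem:2.9-Attouch}. The paper phrases the first inequality in terms of epigraphs, showing $(x,\limsup_n\varphi^{k_n}(x^{k_n}))\in\Limsup_k\epi\varphi^k=\epi\varphi$, whereas you invoke the pointwise $\liminf$-characterization from Lemma~\ref{lem:epi-charac}; these are equivalent formulations of the same step.

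One slip in your bookkeeping: you write that ``dropping to the subsequence $N$ only decreases the $\liminf$,'' but passing to a subsequence can only \emph{increase} a $\liminf$. Fortunately that is exactly the direction you need: after extending to a full sequence with $x^k:=x$ for $k\notin N$, Lemma~\ref{lem:epi-charac} gives $\varphi(x)\le\liminf_{k}\varphi^k(x^k)\le\liminf_{k\in N}\varphi^k(x^k)=\liminf_{k\in N}\inf\varphi^k$, and the subsequent sentence about $\inf\varphi^k\le\varphi^k(x)$ is unnecessary. (Equivalently, epi-convergence passes to subsequences, so one may apply Lemma~\ref{lem:epi-charac} to $\{\varphi^{k_n}\}$ directly.) Your caveat about the final singleton assertion is well placed: the paper's proof likewise stops at the inclusion and does not supply the compactness needed to upgrade ``unique cluster point'' to norm convergence of the full sequence.
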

\begin{proof}
Pick $x \in\displaystyle\Limsup_{k\to\infty}\argmin \varphi^k$ and find subsequences $\{k_n\}$ of 
$\N$ and $x^{k_n}\in \argmin \varphi^{k_n}$ such that $x^{k_n}\rightarrow x$ as $n\rightarrow \infty$. Since $\varphi^k \xrightarrow{e}\varphi$, we have $\Limsup_{k\to\infty} (\epi \varphi^k) = \epi\varphi$. By 
\begin{equation*}
\big(x, \limsup_{n\to\infty}\varphi^{k_n} (x^{k_n})\big) = \lim_{l\rightarrow \infty}(x^{k_{n_l}}, \varphi^{k_{n_l}} (x^{k_{n_l}}))\;\text{ for some subsequence }\;\{k_{n_l}\}\; \text{ of }\;\{k_n\}
\end{equation*}
and $(x^{k_{n_l}}, \varphi^{k_{n_l}} (x^{k_{n_l}}))\in \epi (\varphi^{k_{n_l}})$ for all $l\in \N$, it follows that
\begin{equation*}
\big(x, \limsup_{n\to\infty} \varphi^{k_n} (x^{k_n})\big) \in \Limsup_{k\to\infty} (\epi \varphi^k).
\end{equation*}
Hence $\big(x,\limsup_{n\to\infty} \varphi^{k_n}(x^{k_n})\big)\in \epi \varphi$, i.e.,
\begin{equation*}
\varphi (x) \le \limsup_{n\to\infty} \varphi^{k_n} (x^{k_n}).
\end{equation*}
This readily implies that
\begin{equation*}
\varphi (x) \le \limsup_{n\to\infty} \varphi^{k_n} (x^{k_n}) = \limsup_{n\to\infty} (\inf \varphi^{k_n}) \le \limsup_{k\to\infty} (\inf \varphi^k) \le \inf \varphi,
\end{equation*}
where the last inequality is due to estimate \eqref{Att-1} in Lemma~\ref{Lem:2.9-Attouch}. This gives us $x \in \argmin \varphi$ and therefore completes the proof of this lemma.
\end{proof}

The next result provides a major characterization of variational convexity of extended-real-valued functions defined on Banach spaces. It is a far-going generalization of the finite-dimensional characterization given in \cite[Theorem~1,~(c)$\Longleftrightarrow$(b)]{rtr251}.

\begin{Theorem}\label{main:VC}
Let $\varphi:X \to \overline{\R}$ be an l.s.c.\ function defined on a Banach space $X$, and let $\ox^* \in \partial \varphi(\ox)$ be a basic subgradient from \eqref{MordukhovichSubdifferential}.  Consider the following assertions:
\begin{itemize} 
\item[\bf(i)] $\varphi$ is variationally convex at $\bar{x}$ for $\ox^*$.
\item[\bf(ii)] There exists a convex neighborhood $U \times V$ of $(\ox, \ox^*)$ along with $\varepsilon>0$ such that we have
\begin{equation}\label{ii}
\big[(u, u^*) \in\left(U_{\varepsilon} \times V\right) \cap \gph \partial \varphi\big] \Longrightarrow\big[\varphi(x) \geq \varphi(u)+ \la u^*,x-u\ra\big]\;\text{ for all }\ x \in U,
\end{equation}
where $U_\ve$ is taken from Definition~{\rm\ref{def:vc}}.
\end{itemize} 
Then implication {\bf(i)}$\Longrightarrow${\bf(ii)} holds in general Banach spaces. The reverse implication is satisfied if $X$ is a reflexive space and if $\varphi$ is weakly sequentially l.s.c.\ around $\ox$.
\end{Theorem}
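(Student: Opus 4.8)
The plan is to prove the two implications separately, the forward one working verbatim in any Banach space and the reverse one exploiting reflexivity and weak sequential lower semicontinuity. For \textbf{(i)}$\Rightarrow$\textbf{(ii)} I would simply unravel Definition~\ref{def:vc}. Given the convex l.s.c.\ minorant $\widehat\varphi\le\varphi$ on $U$ and the set equality $(U_\varepsilon\times V)\cap\gph\partial\varphi=(U\times V)\cap\gph\partial\widehat\varphi$, take any $(u,u^*)$ in the left-hand (hence common) set. Then $u^*\in\partial\widehat\varphi(u)$ in the sense of convex analysis and $\varphi(u)=\widehat\varphi(u)$, so the convex subgradient inequality gives $\widehat\varphi(x)\ge\widehat\varphi(u)+\la u^*,x-u\ra$ for all $x\in U$, and combining with $\varphi\ge\widehat\varphi$ yields $\varphi(x)\ge\widehat\varphi(x)\ge\widehat\varphi(u)+\la u^*,x-u\ra=\varphi(u)+\la u^*,x-u\ra$, which is exactly \eqref{ii}. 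No geometry is needed here.

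For the reverse implication I would construct the candidate minorant explicitly as the supremum of the affine functions read off from the localized subgradients, namely $\widehat\varphi(x):=\sup\{\varphi(u)+\la u^*,x-u\ra \mid (u,u^*)\in(U_\varepsilon\times V)\cap\gph\partial\varphi\}$ for $x\in U$. By \eqref{ii} each affine piece lies below $\varphi$ on $U$, so $\widehat\varphi\le\varphi$; being a supremum of continuous affine functions, $\widehat\varphi$ is convex and l.s.c.; and the piece indexed by $(\ox,\ox^*)$ forces $\widehat\varphi(\ox)=\varphi(\ox)$. The ``easy'' inclusion is immediate: if $(u,u^*)\in(U_\varepsilon\times V)\cap\gph\partial\varphi$, then its own affine piece enters the supremum, giving $\widehat\varphi(u)=\varphi(u)$ together with $u^*\in\partial\widehat\varphi(u)$, so that $(U_\varepsilon\times V)\cap\gph\partial\varphi\subseteq\gph\partial\widehat\varphi$ with matching values. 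The whole difficulty is concentrated in the reverse inclusion, which I would first reduce to the single assertion that $\varphi(u)=\widehat\varphi(u)$ whenever $(u,u^*)\in\gph\partial\widehat\varphi$ with $(u,u^*)$ in a suitably small neighborhood of $(\ox,\ox^*)$: once this equality is known, $\varphi\ge\widehat\varphi\ge\widehat\varphi(u)+\la u^*,\cdot-u\ra=\varphi(u)+\la u^*,\cdot-u\ra$ on $U$ yields $u^*\in\widehat\partial\varphi(u)\subseteq\partial\varphi(u)$, while continuity of the convex $\widehat\varphi$ near $\ox$ gives $u\in U_\varepsilon$.

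To produce genuine subgradients of $\varphi$ near $(u,u^*)$ I would minimize over a ball $\mathbb B_\rho(u)$ the tilted, penalized function $x\mapsto\varphi(x)-\la u^*,x-u\ra+\tfrac{\lambda}{2}\|x-u\|^2$. Here reflexivity makes the ball weakly sequentially compact and weak sequential lower semicontinuity of $\varphi$ (the tilt being weakly continuous and the quadratic weakly sequentially l.s.c.) guarantees a minimizer $x_\lambda$; after passing to an equivalent smooth norm, available since $X$ is reflexive, $\tfrac12\|\cdot\|^2$ is differentiable with gradient $J$, so Fermat's rule at an interior $x_\lambda$ gives $u_\lambda^*:=u^*-\lambda J(x_\lambda-u)\in\partial\varphi(x_\lambda)$. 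The heart of the argument is then a cancellation: using $\la J(x_\lambda-u),u-x_\lambda\ra=-\|x_\lambda-u\|^2$ in the minorant inequality $\widehat\varphi(u)\ge\varphi(x_\lambda)+\la u_\lambda^*,u-x_\lambda\ra$ and combining with $\varphi(x_\lambda)-\la u^*,x_\lambda-u\ra\ge\widehat\varphi(u)$ (a consequence of $u^*\in\partial\widehat\varphi(u)$ and $\varphi\ge\widehat\varphi$) forces $\lambda\|x_\lambda-u\|^2\le0$, hence $x_\lambda=u$, $u_\lambda^*=u^*\in\partial\varphi(u)$, and $\varphi(u)=\widehat\varphi(u)$ by the easy inclusion.

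The main obstacle is that this cancellation is licensed only when $(x_\lambda,u_\lambda^*)$ actually lies in the localization $(U_\varepsilon\times V)\cap\gph\partial\varphi$, and the two natural parameter regimes pull in opposite directions: as $\lambda\downarrow0$ one has $\|u_\lambda^*-u^*\|=\lambda\|x_\lambda-u\|\le\lambda\rho\to0$ so the dual variable stays in $V$, but $x_\lambda$ tends only to a minimizer of the untilted problem, whereas for large $\lambda$ one gets $x_\lambda\to u$ while the dual estimate degrades. Reconciling these, and in particular ruling out an upward jump of $\varphi$ at a point $u$ near $\ox$ carrying a subgradient $u^*\in V\cap\partial\widehat\varphi(u)$ with $\varphi(u)>\widehat\varphi(u)$ (which would violate $u\in U_\varepsilon$), is precisely where I expect the argument to be delicate. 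My plan is to handle it by a nested, shrinking choice of the neighborhoods $U,V$ together with a compactness/contradiction argument: taking weak limits of the minimizers $x_\lambda$ and of their values along $\lambda\downarrow0$, keeping the limit inside the localization, and running the cancellation there. This is exactly the step in which reflexivity and weak sequential lower semicontinuity are indispensable, and in which the convergence Lemmas~\ref{Lem:2.9-Attouch} and~\ref{Lem:fixed1} on infima and argmin sets, supported by the epi-convergence criterion of Lemma~\ref{lem:epi-charac}, are used to control the limiting behavior of the penalized family.
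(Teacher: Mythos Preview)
Your forward implication, the construction of $\widehat\varphi$ as a supremum of affine pieces, the easy inclusion $(U_\varepsilon\times V)\cap\gph\partial\varphi\subset(U\times V)\cap\gph\partial\widehat\varphi$ with matching values, and the final cancellation step are all correct and coincide with the paper's argument (the cancellation is the paper's Claim~6, there phrased via monotonicity of $\partial\widehat\varphi$). The reduction of the hard inclusion to the single assertion ``$\varphi(u)=\widehat\varphi(u)$ whenever $(u,u^*)\in(U\times V)\cap\gph\partial\widehat\varphi$'' is also exactly what the paper isolates in its Claim~1.

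The genuine gap is the parametrization you use to push the proximal minimizer into the localization. Varying $\lambda$ at a \emph{fixed} $(u,u^*)\in\gph\partial\widehat\varphi$ cannot be made to work along the lines you sketch: for $\lambda\downarrow 0$ the minimizers $x_\lambda$ cluster at minimizers of $\varphi-\la u^*,\cdot-u\ra$ over the ball, which need not be near $u$ (and $u$ itself need not lie in $\dom\varphi$), while for $\lambda\uparrow\infty$ one only gets $\tfrac{\lambda}{2}\|x_\lambda-u\|^2\le\varphi(\ox)-\widehat\varphi(u)+O(\|\ox-u\|)$, so $\|u_\lambda^*-u^*\|=\lambda\|x_\lambda-u\|$ blows up whenever $\varphi(u)>\widehat\varphi(u)$. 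No intermediate $\lambda$ is singled out by these estimates, and the ``weak limits along $\lambda\downarrow0$'' plan does not close the loop.

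The paper's resolution is to \emph{fix} the quadratic coefficient (at $\tfrac12$) and instead let the base pair move: take any sequence $(x_k,x_k^*)\to(\ox,\ox^*)$ along $\gph\partial\widehat\varphi$ and set
\[
\psi_{x_k,x_k^*}(y):=\varphi(y)-\widehat\varphi(x_k)-\la x_k^*,y-x_k\ra+\tfrac12\|y-x_k\|^2+\delta_\Omega(y),
\]
with $\Omega\subset U$ closed, bounded, convex, and containing $\ox$ in its interior. Because $(x_k,x_k^*)\in\gph\partial\widehat\varphi$ and $\varphi\ge\widehat\varphi$ on $\Omega$, each $\psi_{x_k,x_k^*}\ge 0$, and at the limit pair $(\ox,\ox^*)$ the \emph{unique} minimizer is $\ox$ with value $0$. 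The subdifferential continuity of the convex $\widehat\varphi$ gives $\widehat\varphi(x_k)\to\widehat\varphi(\ox)$, so Lemma~\ref{lem:epi-charac} yields $\psi_{x_k,x_k^*}\xrightarrow{e}\psi_{\ox,\ox^*}$, and Lemmas~\ref{Lem:2.9-Attouch}--\ref{Lem:fixed1} force the minimizers $\tilde x_k\to\ox$ in norm. This delivers all three localization requirements \emph{simultaneously}: $\tilde x_k\in\inte\Omega\subset U$, $\varphi(\tilde x_k)<\varphi(\ox)+\varepsilon$ (from $\psi_{x_k,x_k^*}(\tilde x_k)\to 0$), and $\tilde x_k^*:=x_k^*-j(\tilde x_k-x_k)\to\ox^*$ since $\|j(\tilde x_k-x_k)\|=\|\tilde x_k-x_k\|\to 0$. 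Your cancellation then applies verbatim to conclude $\tilde x_k=x_k$. In short, the epi-convergence lemmas are meant to be indexed by the base point rather than by $\lambda$; once you make that switch, the remainder of your outline goes through.
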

\begin{proof} First we verify implication {\bf(i)}$\Longrightarrow${\bf(ii)} in any Banach space $X$. Assuming the variational convexity of $\varphi$ at $\ox$ for $\ox^*$, find a convex neighborhood $U\times V$ of $(\ox,\ox^*)$ and a convex l.s.c.\ function $\widehat{\varphi}\le \varphi$ on $U$ such that
\begin{equation*}
\left(U_{\varepsilon} \times V\right) \cap \operatorname{gph} \partial \varphi=(U \times V) \cap \operatorname{gph} \partial \widehat{\varphi} \text{ and } \varphi(x)=\widehat{\varphi}(x)\; \text{ at the common elements }\;(x, x^*)
\end{equation*}
for some $\ve>0$. Without loss of generality, suppose that $\widehat{\varphi}$ is convex on $X$. For any $(u,u^*)\in (U_{\e}\times V)\cap \gph \partial  \varphi$ we get $(u,u^*)\in \gph \partial\widehat{\varphi}$, and therefore
\begin{equation*}
\widehat{\varphi}(x)\ge \widehat{\varphi}(u) + \la u^*,x-u\ra\;\text{ whenever }\;x\in U.
\end{equation*}
Since $\widehat{\varphi}(x)\le \varphi(x)$ and $\widehat{\varphi}(u)=\varphi(u)$, it follows that $\varphi(x) \ge \varphi(u) + \la u^*, x - u\ra$ for all $x \in U$, which justifies implication {\bf(i)}$\Longrightarrow${\bf(ii)}.\vspace*{0.05in}

To verify next the reverse implication {\bf(ii)}$\Longrightarrow${\bf(i)}, observe first that having ${\bf(ii)}$ allows us to shrink $U$ and $V$ if necessary so that $U\subset B_{r_1}(\ox)$ and $V\subset B_{r_2}(\ox^*)$ with $r_1 < \varepsilon/(\|\ox^*\|+r_2)$. Define the function $\widehat{\varphi}:X \rightarrow \overline{\R}$ by 
\begin{equation}\label{phimu}
\widehat{\varphi}(x): = \sup\big\{\varphi(u) + \la u^*, x-u \ra\;\big|\;(u,u^*)\in (U_{\e}\times V)\cap \gph \partial \varphi\big\}, \quad x \in X. 
\end{equation}
Since $\widehat{\varphi}$ is the supremum of a collection of affine functions, it is l.s.c.\ and convex on the whole space $X$. We clearly have the estimate
\begin{equation*}
\widehat{\varphi}(x)\ge \varphi(\ox)+\langle \ox^*,x-\ox\rangle >-\infty\;\mbox{ whenever }\;x\in X.
\end{equation*}
Moreover, it follows from \eqref{ii} and the construction of $\widehat{\varphi}$ that $\widehat{\varphi}\le \varphi$ on $U$. We split the remaining proof of {\bf(ii)}$\Longrightarrow${\bf(i)} into the {\em six steps/claims} as follows. 

\medskip\noindent
\textbf{Claim~1:} \textit{In any Banach space $X$, the inclusion $(x,x^*)\in (U_{\varepsilon}\times V)\cap \gph \partial \varphi$ is equivalent to having $(x,x^*)\in (U\times V)\cap \gph \partial \widehat{\varphi}$ together with $\widehat{\varphi}(x)=\varphi(x)$ for any l.s.c.\ function $\ph\colon X\to\oR$.}\\[1ex]
To justify this claim, consider $(x,x^*)\in (U_{\varepsilon}\times V)\cap \gph \partial \varphi$ and check first that $\widehat{\varphi}(x)=\varphi(x)$. Indeed, $\varphi(x)$ can be equivalently written as 
\begin{equation*}
\varphi(x) = \varphi(x) + \langle x^*, x - x \ra\;\text{ with }\;(x,x^*) \in (U_{\varepsilon}\times V)\cap \gph \partial \varphi,
\end{equation*}
which implies that $\widehat{\varphi}(x) \ge \varphi(x)$. Combining the latter with the inequality $\widehat{\varphi}\leq \varphi$ on $U$, we arrive at $\widehat{\varphi}(x)=\varphi(x)$. To check further that $(x,x^*)\in \gph \partial \widehat{\varphi}$, deduce from $(x,x^*)\in (U_{\varepsilon}\times V)\cap \gph \partial \varphi$ that for any $x^{\prime}\in X$ it follows that
\begin{equation*}
\langle x^*, x^{\prime} -x\rangle + \widehat{\varphi}(x) =  \langle x^*, x^{\prime} -x\rangle + \varphi(x) 
\le \widehat{\varphi}(x^\prime),
\end{equation*}
where the last inequality is a consequence of \eqref{phimu}. This tells us that $\langle x^*, x^\prime -x\rangle  \leq \widehat{\varphi}(x^\prime)-\widehat{\varphi}(x)$ for all $x^\prime \in X$, i.e.,  $x^* \in \partial \widehat{\varphi}(x)$.\vspace*{0.05in}

To verify the reverse implication in Claim~1, pick $(x,x^*)\in(U\times V) \cap \gph \partial  \widehat{\varphi}$ such that $\widehat{\varphi}(x)=\varphi(x)$. We aim to show that $(x,x^*)\in (U_{\varepsilon}\times V)\cap \gph \partial \varphi$. By $x^*\in \partial  \widehat{\varphi}(x)$, we have 
\begin{equation*}
\langle x^*, x^\prime  -x\rangle  + \widehat{\varphi}(x)\leq \widehat{\varphi}(x^\prime)\;\text{ for all }\;x^\prime  \in X.
\end{equation*}
It follows from  $\widehat{\varphi}\le \varphi$ on $U$ that
\begin{equation}\label{1}
\langle x^*, x^\prime - x\rangle  + \varphi(x) \le \varphi(x^\prime) \ \text{ whenever } \ x^\prime  \in U,
\end{equation}
and therefore we have the limiting condition
$$
\liminf_{x^\prime \to x} \frac{\varphi(x^\prime)-\varphi(x)-\langle  x^*,x^\prime -x\rangle}{\|x^\prime  - x\|} \ge 0,
$$
which means that $x^*\in \widehat{\partial}\varphi(x)$ and hence $x^*\in \partial \varphi(x)$. It remains to check that $x\in U_{\varepsilon}$, i.e., $\varphi(x) < \varphi(\ox) + \varepsilon$. Indeed, it follows from \eqref{1} and the choice of $U$ and $V$ above that
\begin{equation*}
\begin{array}{ll}
\varphi(x) \le \varphi(\ox)-\langle  x^*, \ox - x\rangle   \le \varphi(\ox) + \|x^*\| \cdot \|\ox - x\| \\ 
\le \varphi(\ox) + (\|x^*-\ox^*\|+\|\ox^*\|)\cdot \|\ox-x\| \le \varphi(\ox) + (r_2 + \|\ox^*\|) r_1 < \varphi(\ox) + \varepsilon,
\end{array}
\end{equation*}
which completes the proof of Claim~1.\vspace*{0.05in}

To proceed further, select a bounded, closed, and convex set $\O\subset U$ containing $\ox$ as an interior point and such that $\varphi$ is weakly sequentially l.s.c.\ at any point in $\O$. For $(x,x^*)\in \gph \partial \widehat{\varphi}$, define the auxiliary l.s.c.\ function $\psi_{x,x^*}$ by
\begin{equation}\label{psi}
\psi_{x,x^*}(y): = \varphi (y) - \widehat{\varphi}(x) - \la x^*, y - x \ra +\dfrac{1}{2}\|y-x\|^2 + \delta_\O (y),\quad y \in X,
\end{equation}
via the {\em indicator function} $\dd_\O(x)$ of $\O$ equal to $0$ if $x\in\O$ and $\infty$ otherwise.

\medskip\noindent
\textbf{Claim~2:} \textit{Let $X$ be a Banach space, and let $\psi_{x,x^*}$ be taken from \eqref{psi}. Then for any $(x,x^*)\in \gph \partial \widehat{\varphi}$ and any $y\in X$, we have $\psi_{x,x^*}(y)\ge 0$. Furthermore, it follows that}
\begin{equation*}
{\rm argmin}\,\psi_{\ox,\ox^*}=\{\ox\}\;\mbox{ and }\;\min\psi_{\ox,\ox^*}=0.
\end{equation*}
Indeed, picking $(x,x^*)\in \gph \partial \widehat{\varphi}$ ensures that $\la x^*, y-x\ra +\widehat{\varphi}(x) \le \widehat{\varphi}(y) \le \varphi(y)$ whenever $y\in U$. Since $\O\subset U$, it follows that $\psi_{x,x^*}(y) \ge 0$ for all $y\in X$, and that $\psi$ vanishes if and only if
\begin{eqnarray*}
\begin{cases}
y=x, \\
y\in\O, \\
\varphi(y)=\widehat{\varphi}(x).
\end{cases}
\end{eqnarray*}
Applying this to $(\ox,\ox^*)$ gives us $\argmin\psi_{\ox,\ox^*}=\{\ox\}$ and $\min \psi_{\ox,\ox^*}=0$ as stated in Claim~2.

\medskip\noindent
\textbf{Claim~3:} {\em Let $X$ be a reflexive Banach space, and let $\ph\colon X\to\oR$ be weakly sequentially l.s.c.\ around $\ox$. Then each function $\psi_{x,x^*}$ achieves its global minimum on $X$.}\\[1ex]
Since $\O$ is closed and convex, this set is weakly closed by the classical Mazur theorem, and hence it is weakly sequentially closed. On the other hand, we know that the function $\la x^*,\cdot -x\ra$ is weakly continuous, while $\|\cdot-x\|^2$ is weakly sequentially l.s.c.\ around $\ox$. Also, the indicator function $\delta_\O(\cdot)$ is weakly sequentially l.s.c.\ at any point in $\O$. Combining this with the imposed weakly sequentially l.s.c.\ property of $\varphi$ tells us that the function $\psi_{x,x^*}$ is weakly sequentially l.s.c.\ on the bounded and weakly sequentially closed subset $\O$ of the reflexive Banach space $X$. Thus the generalized Weierstrass theorem in this setting (see, for instance, \cite[Theorem~7.3.4]{FA-Kur}) ensures that each $\psi_{x,x^*}$ attains its minimum on $\O$, which is actually a global minimum on $X$ due to the construction of $\psi_{x,x^*}$ in \eqref{psi}. We are done with the proof of Claim~3.

\medskip\noindent
\textbf{Claim~4:} {\em Let $X$ be a Banach space, and let $\ph\colon X\to\oR$ be l.s.c.\ on $X$. Then the function $\psi_{x,x^*}$ depends {\sc epicontinuously} on $(x,x^*)$ with respect to $\gph \partial \widehat{\varphi}$, in the sense that the convergence  $(x_k,x_k^*)\xrightarrow{\gph \partial \widehat{\varphi}}(x,x^*)$ implies that $\psi_{x_k,x_k^*} \xrightarrow{e}\psi_{x,x^*}$ as $k\to\infty$.} 
\\[1ex]
To justify this claim, take a sequence $(x_k,x_k^*)\xrightarrow{\gph \partial \widehat{\varphi}}(x,x^*)$ as $k\to\infty$ and verify the epi-convergence $\psi_{x_k,x_k^*} \xrightarrow{e} \psi_{x,x^*}$  by using its characterization given in Lemma~\ref{lem:epi-charac}. To this end, pick any $y\in X$ and any sequence $y_k\rightarrow y$. Since $\widehat{\varphi}$
is l.s.c.\ and convex on $X$, it is subdifferentially continuous at $x\in \dom \widehat{\varphi}$; see, e.g., \cite[Example~13.30]{Rockafellar98} which proof works in arbitrary Banach spaces. Combining this with $(x_k,x_k^*)\xrightarrow{\gph \partial\widehat\varphi} (x,x^*)$ gives us the convergence $\widehat{\varphi}(x_k)\rightarrow \widehat{\varphi}(x)$ as $k\to\infty$. Therefore, we readily get the relationships
\begin{eqnarray*}
\begin{array}{ll}
\disp\liminf_{k\to\infty}\psi_{x_k,x_k^*}(y_k)\ge\disp\liminf_{k\to\infty}
\varphi(y_k)+\disp\liminf_{k\to\infty}\delta_\O(y_k)-\disp\lim_{k\to\infty}\widehat{\varphi}(x_k)\\
+\disp\lim_{k\to\infty}\Big[\dfrac{1}{2}\|y_k-x_k\|^2 -\la x^*_k, y_k - x_k\ra \Big]\ge\varphi (y) + \delta_\O(y) - \widehat{\varphi}(x) + \dfrac{1}{2}\|y-x\|^2 - \la x^*,y-x\ra\\
=\psi_{x,x^*}(y).
\end{array}
\end{eqnarray*} 
On the other hand, choosing the constant sequence $z_k:= y$ tells us that
\begin{eqnarray*}
\begin{array}{ll}
\disp\limsup_{k\to\infty}\psi_{x_k,x_k^*}(y)=\disp\limsup_{k\to\infty} \varphi(y)+\disp\limsup_{k\to\infty} \delta_\O(y)-\disp\lim_{k\to\infty} \widehat{\varphi}(x_k)\\
+\disp\lim_{k\to\infty}\Big[\dfrac{1}{2}\|y-x_k\|^2 -\la x^*_k, y -x_k\ra \Big]=\varphi(y) + \delta_\O(y) - \widehat{\varphi}(x) + \dfrac{1}{2}\|y-x\|^2 - \la x^*,y-x\ra \\
=\psi_{x,x^*}(y),
\end{array}
\end{eqnarray*}
which ensures that $\psi_{x_k,x_k^*}\xrightarrow{e}\psi_{x,x^*}$ as $k\to\infty$ and thus verifies the claim.

\medskip\noindent
\textbf{Claim~5:} {\em Let $X$ be a reflexive Banach  space. Suppose that the sequence $(x_k,x_k^*)\in (U\times V)\cap \gph \partial \widehat{\varphi}$ converges to some 
$(\ox,\ox^*)$ as $k\to\infty$ and that the function $\ph$ is weakly sequentially l.s.c.\ around $\ox$. Then for each $k\in\N$ sufficiently large, there exists a pair $(\tilde{x}_k,\tilde{x}^*_k)\in (U_{\varepsilon}\times V)\cap \gph \partial \varphi$ such that $\widehat{\varphi}(\tilde{x}_k)=\varphi (\tilde{x}_k)$ and $x_k^* - \tilde{x}^*_k \in J(\tilde{x}_k - x_k)$.} \\[1ex]
Indeed, it follows from Claim~4 that $\psi_{x_k,x_k^*}\xrightarrow{e}\psi_{\ox,\ox^*}$ as $k\to\infty$. Taking into account that $\argmin\psi_{x_k,x_k^*}\ne\emp$ by Claim~3 and employing Lemma~\ref{Lem:fixed1} allow us to construct a sequence $\{\tilde{x}_k\}$ such that $\tilde{x}_k \rightarrow \ox$ as $k\to\infty$ and that
\begin{eqnarray*}
\tilde{x}_k \in \argmin\psi_{x_k,x^*_k} \cap \mathrm{int}\,\O\;\text{ for all large }\;k\in\N. 
\end{eqnarray*}
Lemma~\ref{Lem:2.9-Attouch} gives us the relationships 
\begin{equation*}
0=\min\psi_{\ox,\ox^*} \ge \limsup_{k\to\infty}(\inf\psi_{x_k,x_k^*}) = \limsup_{k\to\infty} \big(\psi_{x_k,x_k^*}(\tilde{x}_k)\big),
\end{equation*}
which ensure that $\psi_{x_k,x_k^*}(\tilde{x}_k)\searrow 0$ due to $\psi_{x_k,x_k^*}(\tilde{x}_k) \ge 0$ for all $k\in\N$. Thus for sufficiently large $k\in \N$, we have the conditions
\begin{equation*}
\varphi(\tilde{x}_k) - \widehat{\varphi}(x_k)=\psi_{x_k,x_k^*}(\tilde{x}_k) + \la x^*_k, \tilde{x}_k - x_k\ra - \dfrac{1}{2}\|\tilde{x}_k - x_k\|^2  < \dfrac{\e}{2}.
\end{equation*}
The latter clearly implies the estimate
\begin{eqnarray*}
\varphi(\tilde{x}_k) &<& \widehat{\varphi}(x_k) + \dfrac{\e}{2} \\ 
&\le& \widehat{\varphi}(\ox) + \e \quad \text{(since $\widehat{\varphi}$ is subdifferentially continuous at $\ox$)} \\
&=& \varphi(\ox) + \e,
\end{eqnarray*}
which tells us that $\tilde{x}_k\in U_{\e}$ for sufficiently large $k$. Remembering that $\tilde{x}_k$ is a minimizer of $\psi_{x_k,x^*_k}$ on $X$, we get by the subdifferential Fermat rule (see, e.g., \cite[Proposition~1.114]{Mordukhovich06}) that $0\in \partial\psi_{x_k,x^*_k}(\tilde x_k)$. Since $\ph$ is l.s.c.\ while all other functions in \eqref{psi} are locally Lipschitzian around $\ox$ due to the construction of $\psi_{x_k,x^*_k}$, and since any reflexive Banach space is Asplund, we apply to $\partial\psi_{x_k,x^*_k}(\tilde x_k)$ the semi-Lipschitzian subdifferential sum rule from \cite[Theorem~2.33]{Mordukhovich06} to obtain
\begin{eqnarray*}
0 \in \partial\psi_{x_k,x^*_k}(\tilde{x}_k) 
\subset \partial \varphi(\tilde{x}_k) - x^*_k + J(\tilde{x}_k-x_k)\;\mbox{ for all large }\;k\in\N.
\end{eqnarray*}
Observe that $\partial(\|\cdot-x_k\|^2)(\tilde{x}_k) = J(\tilde{x}_k-x_k)$ via the duality mapping \eqref{duality} and that $\partial \delta_\O(\tilde{x}_k)=\{0\}$ due to the choice of $\ox$ and the convergence $\tilde{x}_k\to\ox$. Thus there exist $\tilde{x}^*_k \in \partial \varphi(\tilde{x}_k)$ and $j(\tilde{x}_k-x_k) \in J(\tilde{x}_k-x_k)$ such that $\tilde{x}^*_k = x^*_k - j(\tilde{x}_k-x_k)$. Since $(x_k,x_k^*)\rightarrow (\ox,\ox^*)$ and $\tilde{x}_k \rightarrow \ox$, we also have $\tilde{x}_k - x_k \rightarrow 0$ as $k\to\infty$. By the fact that $\|j(\tilde{x}_k-x_k)\| = \|\tilde{x}_k - x_k\|$, this gives us $j(\tilde{x}_k-x_k) \rightarrow 0$, and so $\tilde{x}_k^* \rightarrow \ox^*$ as $k\to\infty$. In summary, for all $k$ sufficiently large, there are $(\tilde{x}_k,\tilde{x}_k^*)\in \gph \partial \varphi$ such that $\tilde{x}_k \in U_{\e}$ and $(\tilde{x}_k,\tilde{x}_k^*)\rightarrow (\ox,\ox^*)$ as $k\to\infty$. Therefore, $(\tilde{x}_k,\tilde{x}_k^*)\in (U_{\e}\times V)\cap \gph \partial \varphi$. Consequently, $(\tilde{x}_k,\tilde{x}_k^*)\in \gph \partial \widehat{\varphi}$ and $\widehat{\varphi}(\tilde{x}_k)=\varphi(\tilde{x}_k)$ for large $k$ due to Claim~1. This verifies the formulated assertions.

\medskip\noindent
\textbf{Claim~6:} {\em In the setting of Claim~{\rm 5}, we have $\tilde{x}_k=x_k$ for all large $k\in \N$.}\\[1ex]
Indeed, the subgradient mapping $\partial \widehat{\varphi}$ is monotone due to the properness and convexity of $\widehat{\varphi}$. Since both pairs $(x_k,x^*_k)$ and $(\tilde{x}_k,\tilde{x}^*_k)$ belong to $\gph \partial \widehat{\varphi}$ due to Claim~5, we get that 
\begin{equation*}
0 \le \la x^*_k - \tilde{x}^*_k , x_k - \tilde{x}_k \ra = -\|x_k-\tilde{x}_k\|^2,
\end{equation*}
and thus $\tilde{x}_k=x_k$, which verifies this claim.\vspace*{0.05in}

Unifying the above claims with shrinking the neighborhoods $U$ and $V$ if necessary tell us that for all $(x,x^*)\in (U\times V)\cap \gph \partial \widehat{\varphi}$ we have $\widehat{\varphi}(x)=\varphi(x)$. This completes the proof of implication {\bf(ii)}$\Longrightarrow${\bf(i)} and hence of the whole theorem.
\end{proof}

\section{Characterizations of Variational Convexity via Local Monotonicity and Moreau Envelopes}\label{moreau}

This section provides new characterizations of variational convexity of extended-real-valued l.s.c.\ functions on Banach spaces. The first type of characterizations involves {\em local monotonicity} and {\em local maximal monotonicity} of {\em limiting subgradient mappings}, while the second type characterizes variational convexity of functions via the standard {\em local convexity} of their {\em Moreau envelopes}.\vspace*{0.05in}

Given an l.s.c.\ function $\varphi:X \to  \overline{\R}$ on a Banach space $X$ together with a parameter $\lambda >0$, the \textit{Moreau envelope} $e_{\lambda} \varphi:X\to \overline{\R}$ and the \textit{proximal mapping} $P_{\lambda} \varphi: X \rightrightarrows X$ associated with $\ph$ and $\lm$ are defined, respectively, by
\begin{equation}\label{el}
e_\lambda \varphi(x) := \inf_{w\in X} \left\{ \varphi(w) + \dfrac{1}{2\lambda}\|w-x\|^2\right\},\quad x\in X,
\end{equation}   
\begin{equation}\label{pl}
 P_\lambda \varphi(x) := \text{\rm argmin}_{w\in X} \left\{ \varphi(w) + \dfrac{1}{2\lambda}\|w-x\|^2\right\},\quad x\in X.
\end{equation}

Prior to establishing the main characterizations of this section, we present several lemmas, which are of their own interest while being instrumental for the proof of the major results below. The first lemma provides a description of the {\em prox-boundedness} \eqref{prox-bounded} of extended-real-valued functions via their Moreau envelopes \eqref{el}.

\begin{Lemma}\label{prop:charac-prox-bound}
Let $\varphi:X\rightarrow \overline{\R}$ be an l.s.c.\ function defined on a Banach space $X$. Then the following assertions are equivalent:
\begin{enumerate}
\item[{\bf(i)}] $\varphi$ is prox-bounded.
\item[{\bf(ii)}] There exist $\lambda >0$ and $x\in X$ such that $e_{\lambda} \varphi (x) >-\infty$.
\item[{\bf(iii)}] There exists $\lambda_0 >0$ such that $e_{\lambda}\varphi (x)>-\infty$ for any $0<\lambda<\lambda_0$ and for any $x\in X$.
\end{enumerate}
\end{Lemma}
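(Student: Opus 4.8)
The plan is to prove the cyclic chain $\mathbf{(ii)}\Rightarrow\mathbf{(i)}\Rightarrow\mathbf{(iii)}\Rightarrow\mathbf{(ii)}$, where the last step is immediate: assertion (iii) provides finiteness of $e_\lambda\varphi$ at \emph{every} point for a whole interval of parameters, so selecting, say, $\lambda:=\lambda_0/2$ and any $x\in X$ yields (ii). The implication $\mathbf{(ii)}\Rightarrow\mathbf{(i)}$ is also short. If $e_{\lambda}\varphi(x_0)=m>-\infty$ for some $\lambda>0$ and $x_0\in X$, then by the very definition \eqref{el} we have $\varphi(w)+\tfrac{1}{2\lambda}\|w-x_0\|^2\ge m$ for every $w\in X$, which rearranges into the quadratic minorant $\varphi(w)\ge -\tfrac{1}{2\lambda}\|w-x_0\|^2+m$. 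This is precisely \eqref{prox-bounded} with $\ox:=x_0$, $\alpha:=-\tfrac{1}{2\lambda}$, and $\beta:=m$, so $\varphi$ is prox-bounded.

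The heart of the matter is $\mathbf{(i)}\Rightarrow\mathbf{(iii)}$. I would start from a minorant $\varphi(w)\ge \alpha\|w-\ox\|^2+\beta$ valid for all $w\in X$. If $\alpha\ge 0$, then $\varphi\ge\beta$ is bounded below, hence $e_\lambda\varphi(x)\ge\beta>-\infty$ for every $\lambda>0$ and every $x$, and any choice of $\lambda_0$ suffices. The genuine case is $\alpha<0$, where I would set $\lambda_0:=\tfrac{1}{2|\alpha|}$, so that for each $0<\lambda<\lambda_0$ the quantity $\alpha+\tfrac{1}{2\lambda}$ is strictly positive. Fixing $x\in X$ and abbreviating $d:=\|x-\ox\|$, the device that compensates for the absence of an inner product (and hence of a parallelogram identity or completion of squares) in a general Banach space is the triangle inequality: since $\alpha<0$ and $\|w-\ox\|\le\|w-x\|+d$, squaring and multiplying by the negative number $\alpha$ gives $\alpha\|w-\ox\|^2\ge\alpha(\|w-x\|+d)^2$.

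Substituting this into the Moreau integrand and writing $b:=\|w-x\|\ge 0$ reduces the estimate to a single scalar quadratic,
$$\varphi(w)+\frac{1}{2\lambda}\|w-x\|^2\ \ge\ \Big(\alpha+\frac{1}{2\lambda}\Big)b^2+2\alpha d\,b+\big(\alpha d^2+\beta\big),$$
whose leading coefficient $\alpha+\tfrac{1}{2\lambda}$ is positive by the choice of $\lambda_0$. A convex scalar quadratic is bounded below on $\R$ (a fortiori on $b\ge 0$), so the right-hand side has a finite infimum over $w$, giving $e_\lambda\varphi(x)>-\infty$. As $x\in X$ was arbitrary and $\lambda_0$ does not depend on $x$, this establishes (iii). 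The one place that needs care is the sign bookkeeping: one must invoke the triangle inequality in the direction turning the negative term $\alpha\|w-\ox\|^2$ into a lower bound, which is why the \emph{upper} estimate for $\|w-\ox\|$ is the correct choice, and the positivity of the resulting leading coefficient is exactly what the threshold $\lambda_0=\tfrac1{2|\alpha|}$ secures. I expect this scalar reduction, rather than any functional-analytic machinery, to be the only real idea in the proof.
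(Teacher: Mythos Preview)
Your proof is correct and uses essentially the same idea as the paper---the triangle inequality to shift the center of the quadratic minorant, reducing the estimate to a scalar quadratic in $b=\|w-x\|$ whose leading coefficient is positive. The paper runs the cycle in the order \textbf{(i)}$\Rightarrow$\textbf{(ii)}$\Rightarrow$\textbf{(iii)}$\Rightarrow$\textbf{(i)} and defines $\lambda_0$ as the supremum of admissible $\lambda$ rather than your explicit $1/(2|\alpha|)$, but the substantive computation in its \textbf{(ii)}$\Rightarrow$\textbf{(iii)} step is exactly your \textbf{(i)}$\Rightarrow$\textbf{(iii)} argument.
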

\begin{proof}
We begin with verifying implication {\bf(i)}$\Longrightarrow${\bf(ii)}. The prox-boundedness of $\ph$ gives us $\alpha$, $\beta\in \R$ and $x\in X$ such that 
\begin{equation*}
\varphi (y) \ge \alpha \|y-x\|^2 + \beta\;\text{ for all }\;y\in X.
\end{equation*}
Then for any $\lambda>0$ satisfying $\alpha+\dfrac{1}{2\lambda}\ge 0$, we get 
\begin{equation*}
\varphi(y) + \dfrac{1}{2\lambda}\|y-x\|^2 \ge \Big(\alpha+\dfrac{1}{2\lambda}\Big) \|y-x\|^2 +\beta\;\text{ whenever }\;y\in X.
\end{equation*}
Taking the infimum of both sides above with respect to $y\in X$ brings us to  $e_{\lambda} \varphi(x)\ge \beta >-\infty$, which therefore verifies assertion {\bf(ii)}.

Suppose next that {\bf(ii)} holds. Then denote 
\begin{equation*}
\lambda_0:= \sup\big\{\lambda >0\;\big|\;\exists x\in X\;\text{ and }\; e_{\lambda}\varphi(x)>-\infty\big\}
\end{equation*}
and deduce from {\bf(ii)} that $\lambda_0 >0$. Consider any $0<\lambda<\lambda_0$ and any $x\in X$. Since $\lambda <\lambda_0$, there exists $\bar{\lambda} > \lambda$ with $\ox\in X$ satisfying $e_{\bar{\lambda}}\varphi(\ox)>-\infty$. For $y\in X$, we clearly have that
\begin{eqnarray*}
 \varphi(y)+\dfrac{1}{2\lambda}\|y-x\|^2 &=& \varphi(y)+\dfrac{1}{2\bar{\lambda}}\|y-\ox\|^2 + \dfrac{1}{2\lambda}\|y-x\|^2 -\dfrac{1}{2\bar{\lambda}}\|y-\ox\|^2 \\
&\ge& e_{\bar{\lambda}}\varphi(\ox) + \dfrac{1}{2\lambda}\|y-x\|^2 -\dfrac{1}{2\bar{\lambda}}(\|y-x\|+\|x-\ox\|)^2 \\
&=& e_{\bar{\lambda}}\varphi(\ox) + \left(\dfrac{1}{2\lambda}-\dfrac{1}{2\bar{\lambda}}\right)\cdot\|y-x\|^2 -\dfrac{1}{\bar{\lambda}}\|y-x\|\cdot\|x-\ox\|-\dfrac{1}{2\bar{\lambda}}\|x-\ox\|^2.
\end{eqnarray*}
Since $e_{\bar{\lambda}}\varphi(\ox)>-\infty$ and $\bar{\lambda}>\lambda$, the expression $\varphi(y)+\dfrac{1}{2\lambda}\|y-x\|^2$ is bounded below when $y$ varies in $X$. This gives us the conditions
\begin{equation*}
e_{\lambda}\varphi (x) = \inf_{y\in X} \left\{ \varphi (y)+\dfrac{1}{2\lambda}\|y-x\|^2 \right\} >-\infty,
\end{equation*}
which verify {\bf(iii)}. The remaining implication {\bf(iii)}$\Longrightarrow${\bf(i)} is trivial, and thus we are done.
\end{proof}

Given $x^*\in X^*$ and $\lambda >0$, define the {\em $x^*$-Moreau envelope} for $\varphi\colon X\to\oR$ by
\begin{equation}\label{defi:Motilted}
e^{x^*}_{\lambda} \varphi (x):= \inf_{w\in X} \left\{ \varphi(w) - \la x^*,w\ra +\dfrac{1}{2\lambda}\|w-x\|^2\right\},\quad x\in X,
\end{equation}
and the {\em $x^*$-proximal mapping} associated with $\ph$ by 
\begin{equation}\label{defi:proxtilted}
P_{\lambda}^{x^*} \varphi(x) := \text{\rm argmin}_{w\in X} \left\{ \varphi(w) - \la x^*,w\ra +\dfrac{1}{2\lambda}\|w-x\|^2\right\},\quad x\in X.
\end{equation}
It is easy to see that the $x^*$-Moreau envelope for $\varphi$ at $x\in X$ is the standard Moreau envelope for the \textit{tilted function} $\varphi (\cdot)-\langle x^*,\cdot\rangle$ at the same point. Therefore, we have the following relationships between $x^*$-Moreau envelopes (resp.\ $x^*$-proximal mappings) and standard Moreau envelopes (resp.\ proximal mappings) in the setting of Hilbert spaces:
\begin{equation}\label{rela:Moreau}
e_{\lambda}^{\ox^*}\varphi (\ox) = e_{\lambda}\varphi (\ox+\lambda \ox^*) -\langle \ox^*,\ox\rangle -\frac{\lambda}{2}\|\ox^*\|^2\quad \text{ and }\quad
P_{\lambda}^{\ox^*}\varphi (\ox) = P_{\lambda}\varphi (\ox + \lambda \ox^*).
\end{equation}
This result can be found in \cite[Lemma~2.2]{Poliquin04} which proof works for functions on Hilbert spaces.\vspace*{0.05in} 

The next lemma provides characterizations of {\em fixed points} of the $x^*$-proximal mappings defined in \eqref{defi:proxtilted}.  Recall from \cite[Definition~8.45]{Rockafellar98} that $\ox^*\in X^*$ is a \textit{proximal subgradient} of $\varphi$ at $\ox\in \dom \varphi$ if there exist numbers $r>0$ and $\varepsilon>0$ such that for all $x \in B_{\e}(\ox)$ we have 
\begin{equation}\label{fixed1}
\varphi(x)\ge \varphi(\ox) + \la \ox^*, x - \ox\ra - \dfrac{r}{2}\|x - \ox\|^2.
\end{equation}

\begin{Lemma}\label{prop:Patxbar} Consider an l.s.c.\ function $\varphi:X\to \overline{\R}$ with $\ox\in \dom \varphi$, and $\ox^*\in X^*$. Then the following statements are equivalent:
\begin{enumerate}
\item[{\bf(i)}] $\varphi$ is prox-bounded, and $\ox^*$ is a proximal subgradient of $\varphi$ at $\ox$.
\item [{\bf(ii)}] $P^{\ox^*}_{\lambda} \varphi (\ox)=\{\ox\}$ for some $\lambda >0$.
\item[{\bf(iii)}] $P^{\ox^*}_{\lambda} \varphi (\ox)=\{\ox\}$ for all $\lm>0$ sufficiently small.
\end{enumerate}
\end{Lemma}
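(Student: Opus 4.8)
The plan is to prove the equivalences through the cyclic chain $\textbf{(i)}\Rightarrow\textbf{(iii)}\Rightarrow\textbf{(ii)}\Rightarrow\textbf{(i)}$, exploiting the tilting relation so that we may work with the tilted function $\psi(\cdot):=\varphi(\cdot)-\la\ox^*,\cdot\ra$ and its ordinary proximal mapping. Observe that $\ox^*$ is a proximal subgradient of $\varphi$ at $\ox$ precisely when $0$ is a proximal subgradient of $\psi$ at $\ox$, that is, when $\psi(x)\ge\psi(\ox)-\frac{r}{2}\|x-\ox\|^2$ on a ball $B_\e(\ox)$; and $P^{\ox^*}_\lambda\varphi(\ox)=P_\lambda\psi(\ox)$ by the very definition in \eqref{defi:proxtilted}. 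Moreover $\psi$ is prox-bounded if and only if $\varphi$ is, since tilting by a continuous linear functional changes the quadratic minorant \eqref{prox-bounded} only in its linear part, which is absorbed by completing the square. Thus the whole statement reduces to the untilted assertion: \emph{$\psi$ is prox-bounded with $0$ a proximal subgradient at $\ox$ if and only if $P_\lambda\psi(\ox)=\{\ox\}$ for all small $\lambda>0$ (equivalently, for some $\lambda>0$).}

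First I would prove $\textbf{(i)}\Rightarrow\textbf{(iii)}$. Prox-boundedness together with Lemma~\ref{prop:charac-prox-bound} guarantees that $e_\lambda\psi(\ox)>-\infty$ for all sufficiently small $\lambda$, so the infimum defining $P_\lambda\psi(\ox)$ is genuinely finite and the objective $w\mapsto\psi(w)+\frac{1}{2\lambda}\|w-\ox\|^2$ is bounded below; this is what lets us talk about the argmin at all. Using the proximal-subgradient inequality \eqref{fixed1} for $\psi$ at $\ox$ on $B_\e(\ox)$, for any $w\in B_\e(\ox)$ we have $\psi(w)+\frac{1}{2\lambda}\|w-\ox\|^2\ge\psi(\ox)+\big(\frac{1}{2\lambda}-\frac{r}{2}\big)\|w-\ox\|^2$. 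Once $\lambda<1/r$ this shows $\ox$ is a strict local minimizer of the proximal objective. The real work is to upgrade ``local'' to ``global'': for $w$ outside $B_\e(\ox)$ one must rule out a competing minimizer, and here prox-boundedness does the job quantitatively. The plan is to combine the quadratic minorant with modulus $\alpha$ furnished by \eqref{prox-bounded} and the finiteness of $\psi(\ox)$ to show that, after possibly shrinking $\lambda$, the penalty term $\frac{1}{2\lambda}\|w-\ox\|^2$ dominates on the complement of $B_\e(\ox)$, forcing the objective there to exceed its value $\psi(\ox)$ at $\ox$; hence the unique global minimizer is $\ox$, giving $P_\lambda\psi(\ox)=\{\ox\}$.

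The implication $\textbf{(iii)}\Rightarrow\textbf{(ii)}$ is immediate (specialize to one admissible $\lambda$). For $\textbf{(ii)}\Rightarrow\textbf{(i)}$ I would argue in two parts. Prox-boundedness follows because $P_\lambda\psi(\ox)=\{\ox\}$ presupposes that the proximal objective attains a finite minimum, so $e_\lambda\psi(\ox)>-\infty$, whence Lemma~\ref{prop:charac-prox-bound} delivers prox-boundedness of $\psi$ and therefore of $\varphi$. To recover the proximal-subgradient inequality, I would exploit that $\ox$ being the minimizer of $w\mapsto\psi(w)+\frac{1}{2\lambda}\|w-\ox\|^2$ means $\psi(w)+\frac{1}{2\lambda}\|w-\ox\|^2\ge\psi(\ox)$ for \emph{all} $w\in X$, which rearranges verbatim into $\psi(w)\ge\psi(\ox)-\frac{1}{2\lambda}\|w-\ox\|^2$; reading this back through the tilt gives \eqref{fixed1} for $\varphi$ with $r:=1/\lambda$, so $\ox^*$ is a proximal subgradient of $\varphi$ at $\ox$.

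**Anticipated obstacle.**
The delicate point is the passage from a local to a global minimizer in $\textbf{(i)}\Rightarrow\textbf{(iii)}$: the proximal-subgradient inequality \eqref{fixed1} is only a local estimate on $B_\e(\ox)$, whereas $P_\lambda\psi(\ox)=\{\ox\}$ is a global statement, and in infinite dimensions one cannot appeal to compactness to extract the minimizer. The strategy is to let prox-boundedness carry the global control: the quadratic minorant forces the objective to grow at infinity, and a careful choice of $\lambda$ small enough (depending on $\alpha$, $r$, $\e$, and the finite gap between $\psi(\ox)$ and its values on the boundary sphere) must make the quadratic penalty strictly dominant off $B_\e(\ox)$. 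Getting the constants to line up cleanly, and confirming uniqueness rather than mere attainment of the global minimum, is where the proof will require the most care.
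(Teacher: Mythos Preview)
Your proposal is correct and follows essentially the same route as the paper. The only cosmetic differences are that you reduce to the untilted function $\psi=\varphi-\la\ox^*,\cdot\ra$ before arguing (the paper works directly with the $\ox^*$-proximal objects), and you run the cycle as $\textbf{(i)}\Rightarrow\textbf{(iii)}\Rightarrow\textbf{(ii)}\Rightarrow\textbf{(i)}$ whereas the paper does $\textbf{(i)}\Rightarrow\textbf{(ii)}\Rightarrow\textbf{(iii)}\Rightarrow\textbf{(i)}$; in either order the link between \textbf{(ii)} and \textbf{(iii)} is trivial (monotonicity of the quadratic penalty in $\lambda$), and the substantive step---patching the local proximal inequality on $B_\e(\ox)$ with the global quadratic control from prox-boundedness to force $\ox$ to be the \emph{unique global} minimizer for small $\lambda$---is exactly what the paper does via its estimate~\eqref{K3}.
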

\begin{proof}
Assume that {\bf(i)} holds. To verify {\bf(ii)}, take $\ox^*$ as a proximal subgradient of $\varphi$ at $\ox$ and then find $r,\varepsilon>0$ such that \eqref{fixed1} is satisfied. We intend to show that \eqref{fixed1} holds for all $x \in X$ with a certain modification of $r>0$. Since $\varphi$ is prox-bounded, Lemma~\ref{prop:charac-prox-bound} implies that there exists $\lambda>0$ with $e_{\lambda}\varphi(\ox)>-\infty$. On the other hand, choosing $\bar{\lambda}>0$ to be sufficiently small guarantees that the estimates
\begin{equation}\label{K3}
\varphi(\ox)+\langle  \ox^*,x -\ox\rangle - \frac{1}{2\bar{\lambda}}\|x -\ox\|^2 \le  e_\lambda \varphi(\ox) - \frac{1}{2\lambda}\|x -\ox\|^2 \le \varphi (x)
\end{equation}
hold for all $x$ satisfying $\|x -\ox\|>\varepsilon$. Denote $\overline{r}:=2\max \{r,1/\bar{\lambda}\}$ and deduce from \eqref{fixed1} and \eqref{K3} the fulfillment of the strict inequality
\begin{equation*}
\varphi(x)-\la \ox^*,x\ra + \dfrac{\overline{r}}{2}\|x-\ox\|^2 > \varphi(\ox) - \la \ox^*,\ox\ra \ \text{ for all } \ x  \neq \ox .
\end{equation*}
The latter is equivalent to saying that $P_{\bar{\lambda}}^{\ox^*}\varphi(\ox)=\{\ox\}$, where $\bar{\lambda}:=1/\overline{r}$, and hence {\bf(ii)} is verified.

Implication {\bf(ii)}$\Longrightarrow${\bf(iii)} is trivial. Finally, suppose that {\bf(iii)} holds and deduce from $P^{\ox^*}_{\lambda}\varphi (\ox) = \{\ox\}$ that $e^{\ox^*}_{\lambda}\varphi (\ox) >-\infty$, which yields the prox-boundedness of $\varphi$ by Lemma~\ref{prop:charac-prox-bound}. The assertion that $\ox^*$ is a proximal subgradient of $\varphi$ at $\ox$ is implied by the fact that $\ox$ is a global minimizer of $\varphi (\cdot) - \la \ox^* , \cdot\ra + \dfrac{1}{2\lambda}\|\cdot - \ox\|^2$. Thus we arrive at {\bf(i)} and complete the proof of the lemma.
\end{proof}

The following lemma is a combination of the results taken from \cite[Theorems~5.3 and 5.5]{Thibault04}.

\begin{Lemma}\label{theo:5.3-Thibault} Let $X$ be a $2$-uniformly convex space which norm is G\^ateaux differentiable on $X\setminus\{0\}$, and let $\varphi: X \rightarrow \overline{\R}$ be an l.s.c., prox-bounded, and prox-regular function at $\ox$ for $\ox^*\in \partial \varphi (\ox)$. Then there exist $\lambda_0>0$ and $\varepsilon>0$ such that for any positive number $\lambda\le\lambda_0$, there is some neighborhood $U_\lambda$ of $\ox$ on which $e^{\ox^*}_{\lambda}\varphi$ is $\mathcal{C}^1$-smooth with
\begin{equation*}
\nabla e_\lambda^{\ox^*} \varphi=\lambda^{-1} J \circ (I-P_\lambda^{\ox^*} \varphi),
\end{equation*}
while the proximal mapping $P_\lambda^{\ox^*} \varphi$ is single-valued, continuous, and admits the representation
$$
P_\lambda^{\ox^*} \varphi(u)=\big(I+\lambda J^{-1} \circ\big(T_{\varepsilon}^\varphi-\ox^*\big)\big)^{-1}(u)\;\text { for all }\;u \in U_\lambda,
$$
where $T_{\varepsilon}^\varphi$ is the $\ph$-attentive $\ve$-localization of $\partial\ph$ around $(\ox,\ox^*)$ taken from \eqref{localization}, and where the mapping $\big(T_{\varepsilon}^\varphi-\ox^*\big)(x):=T_{\varepsilon}^\varphi(x)-\ox^*$ is also single-valued on $U_\lambda$.
\end{Lemma}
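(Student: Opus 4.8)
The plan is to reduce the assertion to the \emph{untilted} case and then invoke the cited results of Thibault directly. The key observation, already recorded just after \eqref{rela:Moreau}, is that the $\ox^*$-Moreau envelope and $\ox^*$-proximal mapping of $\varphi$ coincide with the ordinary Moreau envelope and proximal mapping of the \emph{tilted function}
\[
\psi:=\varphi(\cdot)-\la\ox^*,\cdot\ra,
\]
that is, $e^{\ox^*}_\lambda\varphi=e_\lambda\psi$ and $P^{\ox^*}_\lambda\varphi=P_\lambda\psi$ for every $\lambda>0$, straight from the definitions \eqref{defi:Motilted} and \eqref{defi:proxtilted}. Thus it suffices to establish the $\mathcal C^1$-smoothness formula and the resolvent representation for $e_\lambda\psi$ and $P_\lambda\psi$ and then to translate the conclusions back to $\varphi$.

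First I would verify that $\psi$ inherits every standing hypothesis. Lower semicontinuity is immediate since $\la\ox^*,\cdot\ra$ is continuous. Prox-boundedness of $\psi$ follows from that of $\varphi$: writing $\varphi(x)\ge\alpha\|x-\ox\|^2+\beta$ as in \eqref{prox-bounded} and absorbing the linear term $-\la\ox^*,x-\ox\ra$ into a quadratic via Young's inequality shows that $\psi$ again majorizes a quadratic. Since $\partial\psi(x)=\partial\varphi(x)-\ox^*$ for all $x$, the assumption $\ox^*\in\partial\varphi(\ox)$ becomes $0\in\partial\psi(\ox)$; and subtracting $\la\ox^*,\cdot\ra$ from both sides of the prox-regularity estimate \eqref{prox} shows that $\psi$ is prox-regular at $\ox$ for $0$ with the same constants $r$ and (after a harmless shrinking accounting for the value condition) $\varepsilon$.

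With these properties in hand, I would apply \cite[Theorems~5.3 and~5.5]{Thibault04} to $\psi$ on the $2$-uniformly convex space $X$ with G\^ateaux-differentiable norm. This supplies $\lambda_0,\varepsilon>0$ such that for each $0<\lambda\le\lambda_0$ there is a neighborhood $U_\lambda$ of $\ox$ on which $e_\lambda\psi$ is $\mathcal C^1$ with $\nabla e_\lambda\psi=\lambda^{-1}J\circ(I-P_\lambda\psi)$, while $P_\lambda\psi$ is single-valued, continuous, and equal to $\big(I+\lambda J^{-1}\circ T^\psi_\varepsilon\big)^{-1}$ on $U_\lambda$, where $T^\psi_\varepsilon$ is the $\psi$-attentive $\varepsilon$-localization of $\partial\psi$ around $(\ox,0)$ from \eqref{localization}. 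Substituting $e_\lambda\psi=e^{\ox^*}_\lambda\varphi$ and $P_\lambda\psi=P^{\ox^*}_\lambda\varphi$ reproduces both displayed formulas, provided one identifies $T^\psi_\varepsilon$ with $T^\varphi_\varepsilon-\ox^*$.

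That last identification is the only genuinely technical point, and I expect it to be where the most care is needed. Because $\gph\partial\psi$ is the image of $\gph\partial\varphi$ under the vertical shift $(x,x^*)\mapsto(x,x^*-\ox^*)$, the two graphs match under this translation, so the size restrictions on $x$ and on the subgradient in \eqref{localization} transfer verbatim. The sole discrepancy lies in the attentive value condition: $T^\psi_\varepsilon$ uses $|\psi(x)-\psi(\ox)|<\varepsilon$ whereas $T^\varphi_\varepsilon$ uses $|\varphi(x)-\varphi(\ox)|<\varepsilon$, and these differ by $\la\ox^*,x-\ox\ra=O(\|x-\ox\|)$. Since this linear term can be made smaller than any prescribed tolerance by shrinking $U_\lambda$ and slightly adjusting $\varepsilon$, the two attentive localizations coincide on $U_\lambda$, giving $T^\psi_\varepsilon(x)=T^\varphi_\varepsilon(x)-\ox^*$ together with its single-valuedness. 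Reconciling these attentive-value conditions and matching the resulting constants $\lambda_0$ and $\varepsilon$ is the step demanding the most bookkeeping, after which the stated gradient formula and resolvent representation follow.
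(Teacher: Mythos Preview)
The paper does not supply its own proof of this lemma: the sentence preceding it declares it ``a combination of the results taken from \cite[Theorems~5.3 and 5.5]{Thibault04}'' and no argument follows. Your reduction---passing to the tilted function $\psi=\varphi-\la\ox^*,\cdot\ra$, checking that $\psi$ inherits lower semicontinuity, prox-boundedness, and prox-regularity at $\ox$ for $0$, and then invoking the cited theorems for $\psi$---is a correct and natural way to unpack that citation. You have also correctly singled out the one place needing care, namely matching the attentive value conditions for $T^\psi_\varepsilon$ and $T^\varphi_\varepsilon-\ox^*$, and your remedy (absorb the $\la\ox^*,x-\ox\ra=O(\|x-\ox\|)$ discrepancy by shrinking $U_\lambda$ and adjusting $\varepsilon$) is sound. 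In short, your proposal is correct and simply spells out what the paper delegates entirely to the reference; there is no competing argument in the paper to compare against.
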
 

The next result reveals important properties of the duality mapping \eqref{duality}, which are instrumental in the proof of the main result below.

\begin{Lemma}\label{lem:J1} Let $X$ be a uniformly convex Banach space which norm is G\^ateaux differentiable at nonzero points of $X$. Then the corresponding duality mapping $J$ is single-valued and norm-to-norm continuous on the whole space $X$.
\end{Lemma}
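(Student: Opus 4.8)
The plan is to prove the two assertions separately: single-valuedness is a direct consequence of Gâteaux smoothness, while norm-to-norm continuity will follow from a Radon--Riesz (Kadec--Klee) type argument powered by the uniform convexity of $X$. Throughout I would use that $J=\sub\big(\tfrac12\|\cdot\|^2\big)$ and that, by the Milman--Pettis theorem recorded in Section~\ref{prel}, a uniformly convex $X$ is reflexive, so that weak and weak$^*$ convergence coincide in $X^*$.

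For single-valuedness, note first that $J(0)=\{0\}$ by the very definition \eqref{duality}, so the origin is trivial. For $x\neq 0$, I would invoke the equivalence recalled right after \eqref{duality}: a norm is Gâteaux smooth off the origin precisely when its duality mapping is single-valued on all of $X$. Concretely, any $x^*\in J(x)$ gives a norming functional $x^*/\|x\|$ for $x$, and Gâteaux differentiability of $\|\cdot\|$ at $x$ forces this norming functional to be unique (equal to the Gâteaux gradient of the norm at $x$, rescaled by $\|x\|$); hence $J(x)$ is a singleton.

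For continuity I would argue sequentially. Given $x_k\to\ox$, the case $\ox=0$ is immediate since $\|J(x_k)\|=\|x_k\|\to 0$. Assume $\ox\neq 0$ and set $x_k^*:=J(x_k)$, $\ox^*:=J(\ox)$. The sequence $\{x_k^*\}$ is bounded because $\|x_k^*\|=\|x_k\|$, so by reflexivity every subsequence has a further weakly convergent subsequence. If $x_{k_j}^*\overset{w}{\rightarrow}y^*$, then passing to the limit in $\la x_{k_j}^*,x_{k_j}\ra=\|x_{k_j}\|^2$ and using $x_{k_j}\to\ox$ yields $\la y^*,\ox\ra=\|\ox\|^2$, while weak$^*$ lower semicontinuity of the dual norm gives $\|y^*\|\le\|\ox\|$. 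Together these force $\la y^*,\ox\ra=\|y^*\|\,\|\ox\|=\|\ox\|^2$, so $y^*\in J(\ox)$, and single-valuedness gives $y^*=\ox^*$. Thus every weak cluster point of $\{x_k^*\}$ equals $\ox^*$, and boundedness in the reflexive space $X^*$ upgrades this to $x_k^*\overset{w}{\rightarrow}\ox^*$ for the whole sequence, together with $\|x_k^*\|=\|x_k\|\to\|\ox\|=\|\ox^*\|$.

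The remaining and decisive step is to upgrade $x_k^*\overset{w}{\rightarrow}\ox^*$ with $\|x_k^*\|\to\|\ox^*\|$ to norm convergence $x_k^*\to\ox^*$; this is exactly the Radon--Riesz property, and I expect it to be the main obstacle. The difficulty is that uniform convexity of $X$ supplies the Kadec--Klee property in $X$ itself, whereas the vectors $x_k^*$ live in $X^*$, so the convexity geometry must be transferred to the dual. The way I would address this is to exploit the uniform convexity of $X$ through the modulus of convexity (equivalently through the companion duality mapping $J^{-1}$ and the strict convexity of $X^*$ coming from Gâteaux smoothness of $X$), converting the combination of weak convergence and equal norms into a strong-convergence estimate. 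Once this Kadec--Klee upgrade is secured, $x_k^*\to\ox^*$ in norm, which is precisely the norm-to-norm continuity of $J$ at $\ox$, and the proof is complete.
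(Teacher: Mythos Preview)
Your route is essentially the paper's: reflexivity via Milman--Pettis, single-valuedness of $J$ from G\^ateaux smoothness (the paper pulls this and norm-to-weak$^*$ continuity from \cite[Corollaries~1.4--1.5]{Ciora} rather than arguing by subsequences), then $J(x_k)\overset{w}{\rightarrow}J(\ox)$ together with $\|J(x_k)\|\to\|J(\ox)\|$, and finally a Radon--Riesz upgrade. The step you flag as the main obstacle---getting Kadec--Klee in $X^*$ rather than in $X$---is exactly where the paper simply invokes \cite[Proposition~3.32]{FA} without further discussion; your proposed fix via the modulus of convexity and $J^{-1}$ is left as a sketch, so if you want a self-contained argument you still owe that step, but structurally you are on the paper's own path.
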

\begin{proof}
Fix an arbitrary point $x\in X$ and take a sequence $x_k \to x$ as $k\to\infty$. By the Milman-Pettis theorem, the space $X$ is reflexive. Due to the reflexivity of $X$ and the assumed smoothness of the norm $\|\cdot\|$, it follows from \cite[Corollary~1.4]{Ciora} that the dual space $X^*$ is {\em strictly convex}, i.e., its closed unit ball is a strictly convex set. Moreover, \cite[Corollary~1.5]{Ciora} tells us that $J$ is single-valued and norm-to-weak$^*$ continuous. This implies that $J(x_k) \xrightarrow{w^*} J(x)$ as $k\to\infty$. Since $X$ is reflexive, we actually have $J(x_k) \xrightarrow{w} J(x)$. Furthermore, the convergence $\|x_k\|\to \|x\|$ and definition \eqref{duality} of the duality mapping $J$ ensure that $\|J(x_k)\|\to\|J(x)\|$. Finally, we get by \cite[Proposition~3.32]{FA} that $J(x_k)\to J(x)$ as $k\to\infty$, which thus completes the proof.   
\end{proof}

Before deriving the major relationships for variational convexity of functions defined on Banach spaces, we formulate the modified notions of local monotonicity and local maximal monotonicity of subgradient mappings by following the finite-dimensional pattern of \cite{rtr251}.

\begin{Definition}\label{ph-local mon}\rm Given an l.s.c.\ function $\varphi: X \rightarrow \overline{\R}$ defined on a Banach space $X$, we say that the subgradient mapping $\partial \varphi: X \rightrightarrows X^*$ is {\em $\varphi$-locally  monotone} $(${\em $\varphi$-locally maximal monotone$)$} around $(\bar{x}, \bar{x}^*)\in \gph \partial\varphi$ if there exist a neighborhood $U \times V$ of $(\bar{x}, \bar{x}^*)$ and a number $\ve>0$ such that the mapping
$\partial \varphi$ is locally monotone $($locally maximal monotone$)$ with respect to $U_{\varepsilon} \times V$, where $U_{\varepsilon}$ is taken from Definition~{\rm\ref{def:vc}}.
\end{Definition}

Here are the aforementioned major results on variational convexity of extended-real-valued l.s.c.\ functions defined on Banach spaces. The equivalence between assertions {\bf(i)}, {\bf(ii)}, and {\bf(iii)} of the next theorem was established in \cite{rtr251} in finite-dimensional spaces. In \cite{kmp22convex}, the variational convexity of an l.s.c.\ prox-bounded function $\ph$ at $\ox$ for $\ov\in\partial\ph(\ox)$ was characterized in finite dimensions by the local convexity of its Moreau envelope $e_\lm\ph$ from \eqref{el} around $\ox+\lm\ov$ for small $\lm>0$. Our theorem below offers more variety in infinite dimensions by involving geometric properties of Banach spaces. Note, in particular, that---outside of Hilbert spaces---we now replace in assertion {\bf(iv)} below the local convexity of the Moreau envelope $e_\lm\ph$ around $\ox+\lm\ov$ by the local convexity of the {\em tilted} $x^*$-Moreau envelope from \eqref{defi:Motilted} around $\ox$.

\begin{Theorem}\label{1stequi} Let $X$ be a Banach space, and let $\varphi:X \to \overline{\R}$ be an l.s.c. function with $\ox^*\in \partial \varphi(\ox)$. Consider the following assertions:
\begin{itemize}
\item[\bf(i)] $\varphi$ is variationally convex at $\ox$ for $\ox^*$. 
\item[\bf(ii)] $\partial \varphi$ is $\varphi$-locally maximal monotone around $(\ox,\ox^*)$. 
\item[\bf(iii)] $\partial \varphi$ is $\varphi$-locally monotone around $(\ox,\ox^*)$.
\item[\bf(iv)] The $\ox^*$-Moreau envelope $e^{\ox^*}_{\lambda}\varphi$ is locally convex around $\ox$ for all small $\lambda>0$. 
\end{itemize}
Then we have implications  {\bf(i)}$\Longrightarrow${\bf(ii)}$\Longrightarrow${\bf (iii)} in general Banach spaces. Implication {\bf(iii)}$\Longrightarrow${\bf(iv)} holds when $X$ is a $2$-uniformly convex space which norm is smooth, while 
$\varphi$ is prox-bounded and prox-regular at $\ox$ for $\ox^*$. Finally, implication {\bf(iv)}$\Longrightarrow${\bf (i)} is satisfied if in addition the function $\ph$ is weakly sequentially l.s.c.\ around $\ox$.
\end{Theorem}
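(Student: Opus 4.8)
The plan is to follow the branching chain exactly as stated, leaning on the subgradient characterization of Theorem~\ref{main:VC} for the two endpoints and on the smoothing machinery of Lemmas~\ref{prop:Patxbar} and~\ref{theo:5.3-Thibault} for the middle implications.

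\textbf{From {\bf(i)} to {\bf(iii)}.} I would begin with the convex l.s.c.\ function $\widehat\varphi\le\varphi$ supplied by variational convexity, extended (as in the proof of Theorem~\ref{main:VC}) to a proper convex l.s.c.\ function on all of $X$, so that $\partial\widehat\varphi$ is maximal monotone by the classical Rockafellar theorem, which is valid in any Banach space. Since variational convexity gives $(U_\ve\times V)\cap\gph\partial\varphi=(U\times V)\cap\gph\partial\widehat\varphi$ with $U_\ve\subset U$, intersecting both sides with $U_\ve\times V$ yields $\gph\partial\varphi\cap(U_\ve\times V)=\gph\partial\widehat\varphi\cap(U_\ve\times V)$; thus $\overline T:=\partial\widehat\varphi$ witnesses the $\varphi$-local maximal monotonicity of $\partial\varphi$ with respect to $U_\ve\times V$, which is {\bf(ii)}. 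Implication {\bf(ii)}$\Rightarrow${\bf(iii)} is then immediate, since the witnessing operator is in particular monotone and agrees with $\partial\varphi$ on $U_\ve\times V$.

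\textbf{From {\bf(iii)} to {\bf(iv)}.} Here I would invoke Lemma~\ref{theo:5.3-Thibault} to get, for each small $\lambda$, a neighborhood of $\ox$ on which $e^{\ox^*}_\lambda\varphi$ is $\mathcal C^1$ with $\nabla e^{\ox^*}_\lambda\varphi=\lambda^{-1}J\circ(I-P^{\ox^*}_\lambda\varphi)$ and $P^{\ox^*}_\lambda\varphi=(I+\lambda J^{-1}\circ(T^\varphi_\ve-\ox^*))^{-1}$. Writing $p:=P^{\ox^*}_\lambda\varphi$, the resolvent identity gives $z-p(z)=\lambda J^{-1}\big((T^\varphi_\ve-\ox^*)(p(z))\big)$, and applying $J$ together with its positive homogeneity and $J\circ J^{-1}=I$ collapses the gradient to $\nabla e^{\ox^*}_\lambda\varphi(z)=T^\varphi_\ve(p(z))-\ox^*$. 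For $z_1,z_2$ near $\ox$, putting $w_i:=\nabla e^{\ox^*}_\lambda\varphi(z_i)$, $p_i:=p(z_i)$, and using $z_i=p_i+\lambda J^{-1}(w_i)$, I would split
\[
\la w_1-w_2,z_1-z_2\ra=\la T^\varphi_\ve(p_1)-T^\varphi_\ve(p_2),p_1-p_2\ra+\lambda\la w_1-w_2,J^{-1}(w_1)-J^{-1}(w_2)\ra.
\]
The first summand is nonnegative by the $\varphi$-local monotonicity in {\bf(iii)} — using Lemma~\ref{prop:Patxbar} (so $p(\ox)=\ox$) and the continuity of $p$ to keep $(p_i,T^\varphi_\ve(p_i))$ inside the monotonicity region — while the second is nonnegative because $J$ is monotone. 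Hence $\nabla e^{\ox^*}_\lambda\varphi$ is monotone on a convex neighborhood of $\ox$, which for a $\mathcal C^1$ function is equivalent to local convexity, establishing {\bf(iv)}.

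\textbf{From {\bf(iv)} to {\bf(i)}.} I would target condition {\bf(ii)} of Theorem~\ref{main:VC} and then quote that theorem, whose standing hypotheses hold here ($2$-uniform convexity gives reflexivity, and weak sequential lower semicontinuity is assumed). Fix small $\lambda$ so that $e:=e^{\ox^*}_\lambda\varphi$ is convex and $\mathcal C^1$ on a ball $B_\delta(\ox)$, and set $g:=\varphi-\la\ox^*,\cdot\ra$, so that $e=e_\lambda g$ with prox $p$. For $(u,u^*)\in\gph\partial\varphi$ near $(\ox,\ox^*)$ and $v^*:=u^*-\ox^*$, the point $z_u:=u+\lambda J^{-1}(v^*)$ satisfies $p(z_u)=u$, $\nabla e(z_u)=v^*$, and $e(z_u)=g(u)+\tfrac{\lambda}{2}\|v^*\|^2$. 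Combining the convex subgradient inequality $e(z)\ge e(z_u)+\la v^*,z-z_u\ra$ (for $z\in B_\delta(\ox)$) with the envelope bound $e(z)\le g(x)+\tfrac1{2\lambda}\|x-z\|^2$ and then maximizing the resulting lower bound over the free point $z$ — the optimizer being $z^\ast=x+\lambda J^{-1}(v^*)$ — makes the quadratic remainders cancel exactly and delivers $g(x)\ge g(u)+\la v^*,x-u\ra$, that is, $\varphi(x)\ge\varphi(u)+\la u^*,x-u\ra$. Shrinking $U$ and $V$ keeps $z^\ast\in B_\delta(\ox)$ uniformly, so Theorem~\ref{main:VC}{\bf(ii)} holds and variational convexity follows.

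\textbf{Main obstacle.} I expect the delicate points to be the two analytic implications {\bf(iii)}$\Rightarrow${\bf(iv)} and {\bf(iv)}$\Rightarrow${\bf(i)}, where the nonlinearity of $J$ outside Hilbert space must be tamed: the gradient-collapse identity and the term-by-term sign analysis rely on positive homogeneity, the isometry $\|J^{-1}w\|=\|w\|$, and monotonicity of $J$, while the exact cancellation in {\bf(iv)}$\Rightarrow${\bf(i)} hinges on selecting the optimal $z^\ast$. The subtle bookkeeping is to align the several neighborhoods — the $\mathcal C^1$/resolvent region from Lemma~\ref{theo:5.3-Thibault}, the localization radius $\ve$ of $T^\varphi_\ve$, the monotonicity region of {\bf(iii)}, and the convexity ball $B_\delta(\ox)$ — so that all of them, and the auxiliary points $p_i$ and $z^\ast$, remain valid simultaneously after shrinking.
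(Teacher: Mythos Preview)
Your proposal is correct and follows essentially the same route as the paper: the same Rockafellar maximal-monotonicity argument for {\bf(i)}$\Rightarrow${\bf(ii)}, the same resolvent/splitting computation $z_i=p_i+\lambda J^{-1}(w_i)$ with monotonicity of $\partial\varphi$ and of $J^{-1}$ for {\bf(iii)}$\Rightarrow${\bf(iv)}, and the same choice of auxiliary point $z=x+\lambda J^{-1}(u^*-\ox^*)$ (which the paper simply writes down rather than framing as an optimizer) to feed into Theorem~\ref{main:VC} for {\bf(iv)}$\Rightarrow${\bf(i)}. The only point where the paper is more explicit is the $\varphi$-attentive bookkeeping in {\bf(iii)}$\Rightarrow${\bf(iv)}: it introduces the continuous function $\psi_\lambda(x)=\varphi\big(P^{\ox^*}_\lambda\varphi(x)\big)$ to guarantee $\varphi(p_i)<\varphi(\ox)+\varepsilon$, which continuity of $p$ alone does not give---but you already flag exactly this alignment of neighborhoods as the main obstacle.
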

\begin{proof} 
First we verify implication {\bf(i)$\Longrightarrow$(ii)} under the general assumptions on $\ph$ and $X$. It follows from the variational convexity of $\ph$ at $\ox$ for $\ox^*$ that there exist $\varepsilon>0$, a neighborhood $U\times V$ of $(\ox,\ox^*)$, and an l.s.c.\ convex function $\widehat{\varphi}:X\rightarrow \overline{\R}$ such that 
\begin{equation}\label{vc-1}
\gph \partial\widehat{\varphi} \cap (U\times V) = \gph \partial \varphi \cap (U_{\varepsilon}\times V).
\end{equation}
The classical result of \cite[Theorem~A]{Rockafellar70-Pac} applied to $\widehat{\varphi}$ on a Banach space $X$ tells us that the subgradient mapping $\partial\widehat{\varphi}$ is (globally) maximal monotone on $X$. Combining the latter with \eqref{vc-1} implies that the mapping $\partial \varphi$ is locally maximal monotone with respect to $U_{\varepsilon}\times V$, which thus justifies {\bf(ii)} by Definition~\ref{ph-local mon}. The next implication {\bf (ii)$\Longrightarrow$(iii)} is trivial.

Before verifying implications {\bf(iii)}$\Longrightarrow${\bf(iv)} and {\bf(iv)}$\Longrightarrow${\bf(i)}, we deduce from Lemma~\ref{theo:5.3-Thibault} that if $X$ is smooth and $2$-uniformly convex and $\varphi$ is prox-bounded and prox-regular, then there exist $\lambda_0>0$, $\gamma >0$, and a $\varphi$-attentive $\gamma$-localization $T^{\varphi}_{\gamma}: X\rightrightarrows X^*$ given by
\begin{equation}\label{att} 
T_{\gamma}^{\varphi}(x):=\begin{cases}\big\{x^*\in\partial\varphi(x)\big|\;\|x^*-\ox^*\|<\gamma\big\} & \text{if}\quad\|x-\ox\|<\gamma\;\text{ and }\;\varphi(x)<\varphi(\ox)+\gamma,\\
\emptyset  &\text{otherwise}
\end{cases}
\end{equation} 
such that for any $\lambda\in(0,\lambda_0)$ there is a neighborhood $U_\lambda$ of $\ox$ on which $e^{\ox^*}_\lambda\varphi$ is of class $\mathcal{C}^{1}$, that the $\ox^*$-proximal mapping $P_{\lambda}^{\ox^*}\varphi$ is single-valued and continuous on $U_\lambda$ with 
\begin{equation}\label{tilt1}
P_{\lambda}^{\ox^*}\varphi (u) = \big( I + \lambda J^{-1}\circ (T_{\gamma}^{\varphi}-\ox^*)\big)^{-1}(u) \text{ for all } u\in U_{\lambda},
\end{equation}
and that we have the gradient representation
\begin{equation}\label{tilt2}
\nabla e^{\ox^*}_{\lambda}\varphi = \lambda^{-1} J \circ (I-P_{\lambda}^{\ox^*}\varphi).
\end{equation}
To justify now implication {\bf(iii)}$\Longrightarrow${\bf(iv)}, suppose that $\partial\varphi$ is $\varphi$-locally monotone around $(\ox,\ox^*)$ and then find $\varepsilon>0$ and $r>0$ such that the subgradient mapping $\partial\varphi$ is monotone with respect to the set $W^{\e}:=B_{r}(\ox)^{\e}\times B_r(\ox^*)$, where
\begin{equation*}
B_r(\ox)^{\varepsilon}:=\big\{x\in B_r(\ox)\;\big|\;\varphi(x)<\varphi (\ox) + \varepsilon\big\}.
\end{equation*}
Since $\varphi$ is prox-bounded and $\ox^*$ is a proximal subgradient of $\varphi$ at $\ox$ by the assumed prox-regularity, it follows from Lemma~\ref{prop:Patxbar} that
\begin{equation}\label{I-P}
P^{\ox^*}_{\lambda}\varphi(\ox)=\{\ox\}\;\text{ and }\; e_{\lambda}^{\ox^*}\varphi (\ox)=\varphi (\ox)-\langle \ox^*,\ox\rangle.
\end{equation} 
Using the uniform convexity of $X$ together with the smoothness of its norm $\|\cdot\|$, we deduce from Lemma~\ref{lem:J1} that $J$ is norm-to-norm continuous. Considering further the function 
\begin{equation*}
\psi_\lambda:=e^{\ox^*}_\lambda\varphi(\cdot) +\big\langle \ox^*,P^{\ox^*}_{\lambda}\varphi (\cdot)\big\rangle\disp-\frac{1}{2\lambda}\|P_{\lambda}^{\ox^*}\varphi(\cdot)-\cdot \|^2,\quad x\in U_{\lambda}
\end{equation*}
and invoking \eqref{tilt2} and \eqref{I-P} give us the equalities
\begin{equation*}
\psi_{\lambda} (\ox) = e^{\ox^*}_\lambda\varphi(\ox) + \langle \ox^*,P^{\ox^*}_{\lambda}\varphi (\ox)\rangle   -\dfrac{1}{2\lambda}\|P_{\lambda}^{\ox^*}\varphi(\ox)-\ox \|^2 = \varphi (\ox),
\end{equation*}
\begin{equation*}
\nabla e_{\lambda}^{\ox^*} \varphi (\ox) = \lambda^{-1} J \circ\big(\ox - P_{\lambda}^{\ox^*}\varphi (\ox)\big) = 0.
\end{equation*}
It follows from \eqref{I-P} and the continuity of the mappings $\psi_{\lambda}$, $\nabla  e_{\lambda}^{\ox^*}\varphi$, and $P^{\ox^*}_{\lambda}\varphi$ around $\ox$ that
\begin{equation}\label{conti1}
P^{\ox^*}_{\lambda}\varphi (x) \in B_{r}(\ox),\quad    \psi_{\lambda}(x) < \varphi (\ox) + \varepsilon, \ \text{ and } \ \left\|\nabla e_{\lambda}^{\ox^*}\varphi (x)\right\| < r\;\text{ for all }\;x\in U 
\end{equation}
when a neighborhood $U\subset U_{\lambda}$ of $\ox$ is sufficiently small. 
The definitions in \eqref{defi:Motilted}, \eqref{defi:proxtilted} and the above construction of $\psi_\lambda$ ensure that $\psi_\lambda(x)=\varphi (P^{\ox^*}_{\lambda}\varphi (x))$ for all $x\in U$. Pick $x_i\in U$ and denote $y_i:=P_{\lambda}^{\ox^*}\varphi (x_i)$ for $i=1,2$. Employing the representations in \eqref{tilt1} and \eqref{tilt2} and remembering that the duality mapping $J$ is bijective in our setting bring us to
\begin{equation}\label{tilt22}
\lambda^{-1} J(x_i-y_i) +\ox^* \in T^{\varphi}_{\gamma}(y_i)\;\mbox{ and }\;\nabla e^{\ox^*}_{\lambda}\varphi (x_i)= \lambda^{-1} J(x_i-y_i),\quad i=1,2.
\end{equation}
It follows from \eqref{conti1} that $y_i\in B_r(\ox)$, $\nabla e_{\lambda}^{\ox^*}\varphi (x_i)+\ox^* \in B_r(\ox^*)$, and
\begin{equation*}
\varphi (y_i) = \varphi\big(P_{\lambda}^{\ox^*}\varphi (x_i)\big)= \psi_{\lambda} (x_i) <\varphi (\ox) + \varepsilon,\quad i=1,2,
\end{equation*}
which leads us to $(y_i,\nabla e^{\ox^*}_{\lambda}\varphi (x_i)+\ox^*)\in W^{\varepsilon} \cap \gph \partial \varphi$. Observing by \eqref{tilt22} that $x_i = y_i+\lambda J^{-1}(\nabla e^{\ox^*}_{\lambda}\varphi (x_i))$, we get by the local monotonicity of $\partial \varphi$ with respect to $W^{\varepsilon}$ and the global monotonicity of $J^{-1}$ on $X^*$ that
\begin{eqnarray*}
\begin{array}{ll}
\big\langle \nabla e^{\ox^*}_{\lambda}\varphi (x_1)-\nabla e^{\ox^*}_{\lambda}\varphi (x_2),x_1-x_2\big\rangle =\big\langle  \nabla e^{\ox^*}_{\lambda}\varphi (x_1)-\nabla e^{\ox^*}_{\lambda}\varphi (x_2),y_1-y_2\big\rangle \\\\
+ \lambda\big\langle\nabla e^{\ox^*}_{\lambda}\varphi(x_1)-\nabla e^{\ox^*}_{\lambda}\varphi (x_2),J^{-1}\big(\nabla e^{\ox^*}_{\lambda}\varphi (x_1)\big)-J^{-1}\big(\nabla e^{\ox^*}_{\lambda}\varphi (x_2)\big)\big\rangle\ge 0.
\end{array}
\end{eqnarray*}
This tells us that $\nabla e^{\ox^*}_{\lambda}\varphi$ is locally monotone around $\ox$. Utilizing finally the result of \cite[Theorem~4.1.4]{Hiriart-Convex} which proof works also for a general Banach space, we arrive at the local convexity of $e^{\ox^*}_{\lambda}\varphi$ around $\ox$, and thus verify {\bf(iv)}.  

Let us next proceed with the proof of implication {\bf(iv)}$\Longrightarrow${\bf(i)}. Assume that the $\ox^*$-Moreau envelope $e_{\lambda}^{\ox^*} \varphi$ is locally convex around $\ox$ for all $\lm>0$ sufficiently small, that $X$ is smooth and $2$-uniformly convex, and that $\varphi$ is prox-bounded, prox-regular, and weakly sequentially l.s.c.\ around $\ox$. Fixing $\lambda\in(0,\lambda_0)$, suppose without loss of generality that $e_{\lambda}^{\ox^*}\varphi$ is convex on $U_\lambda$. Utilizing the first-order characterization of ${\cal C}^1$-smooth convex functions in \cite[Theorem~4.1.1]{Hiriart-Convex}, which proof holds in  general Banach spaces, gives us the inequality
\begin{equation*}
e_{\lambda}^{\ox^*}\varphi(x)\geq e_{\lambda}^{\ox^*}\varphi(u)+\langle\nabla e_{\lambda}^{\ox^*}\varphi(u),x-u\rangle\;\mbox{ for all }\;x,u\in U_\lambda. 
\end{equation*} 
Select convex neighborhoods $U\subset U_{\lambda}$ of $\ox$ and $V$ of $\ox^*$ such that $U\subset B_\gamma(\ox)$, $V\subset B_\gamma (\ox^*)$, and 
\begin{equation*}
x+\lambda J^{-1}(x^*-\ox^*) \in U_{\lambda}\;\text{ whenever }\;(x,x^*)\in U\times V.
\end{equation*}
Theorem~\ref{main:VC} reduces justifying the variational convexity of $\varphi$ at $\ox$ for $\ox^*$ to verifying that 
\begin{equation}\label{convexineq}
\varphi(x^\prime)\geq \varphi(x)+\langle x^*,x^\prime -x \rangle\;\mbox{ for all }\;x^\prime  \in U,\;(x,x^*) \in (U_\gamma\times V)\cap \gph\partial\varphi,
\end{equation}
where $U_\gamma =\{x\in U\mid \varphi (x) < \varphi (\ox) + \gamma\}$. Pick $x^\prime \in U$, $(x,x^*) \in(U_\gamma\times V)\cap \gph\partial\varphi$ and deduce from \eqref{att} that $(x,x^*) \in\gph T_{\gamma}^{\varphi}$. Employing \eqref{tilt1} and \eqref{tilt2} gives us the equalities
\begin{equation*}
x = P^{\ox^*}_{\lambda}\varphi \big( x+\lambda J^{-1}(x^*-\ox^*)\big)\;\mbox{ and }\;\nabla e^{\ox^*}_{\lambda}\varphi\big(x+\lambda J^{-1}(x^*-\ox^*)\big)=x^* -\ox^*,
\end{equation*}
which yield in turn to the relationships
\begin{eqnarray*}
&&\varphi (x^\prime) - \langle  \ox^*,x^\prime \rangle  + \frac{1}{2\lambda}\big\|x'-\big(x'+\lambda J^{-1}(x^*-\ox^*)\big)\big\|^2 \\
&\ge& e^{\ox^*}_{\lambda}\varphi\big(x'+\lambda J^{-1}(x^*-\ox^*)\big) \\
&\ge& e^{\ox^*}_{\lambda}\varphi\big(x+\lambda J^{-1}(x^*-\ox^*)\big) + \big\langle \nabla e^{\ox^*}_{\lambda}\varphi\big(x+\lambda J^{-1}(x^*-\ox^*)\big), x'-x\big\rangle \\
&=& \left[\varphi (x) -\langle \ox^*,x\rangle  + \frac{1}{2\lambda}\big\|x-\big(x+\lambda J^{-1}(x^*-\ox^*)\big)\big\|^2\right] + \langle x^*-\ox^*,x^\prime  -x\rangle.
\end{eqnarray*}
After the simplification of the above, we arrive at the estimate
\begin{equation*}
\varphi(x^\prime)\ge\varphi(x) + \langle x^*, x^\prime  - x\rangle ,
\end{equation*}
which verifies \eqref{convexineq} and thus completes the proof of the theorem.
\end{proof}

If $\ox$ is a {\em stationary point} of $\ph$, i.e., $\ox^*=0$, 
we get the immediate consequence of Theorem~\ref{1stequi}: 

\begin{Corollary} Let $X$ be a $2$-uniformly convex Banach space which norm is G\^ateaux differentiable at nonzero points, and let $\varphi:X\to\overline{\R}$ be an l.s.c.\ and prox-bounded function with $\ox\in\dom\varphi$ and $0\in\partial\varphi(\ox)$. Consider the following assertions: 
\begin{itemize}
\item[\bf(i)] $\varphi$ is variationally convex at $\ox$ for $0$.
\item[\bf (ii)] $\varphi$ is prox-regular at $\ox$ for $0$, and the Moreau envelope $e_{\lambda} \varphi$ is locally convex around $\ox$ for all small $\lambda>0$. 
\end{itemize}
Then we have{ implication \bf(i)}$\Longrightarrow${\bf(ii)} under the general assumptions made, while the reverse implication {\bf(ii)}$\Longrightarrow${\bf(i)} holds provided that $\varphi$ is weakly sequentially l.s.c.\ around $\ox$.
\end{Corollary}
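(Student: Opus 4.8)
The plan is to obtain the Corollary as the stationary specialization $\ox^*=0$ of Theorem~\ref{1stequi}, relying on two elementary observations. The first is that when $\ox^*=0$ the linear term $\la\ox^*,\cdot\ra$ in \eqref{defi:Motilted} vanishes, so the $\ox^*$-Moreau envelope $e^{\ox^*}_\lambda\varphi$ collapses to the standard Moreau envelope $e_\lambda\varphi$ of \eqref{el}. Consequently, assertion {\bf(iv)} of Theorem~\ref{1stequi} reads verbatim as the local convexity of $e_\lambda\varphi$ around $\ox$ for all small $\lambda>0$, which is exactly the second component of assertion {\bf(ii)} of the Corollary.

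The second observation is that variational convexity of $\varphi$ at $\ox$ for $0$ already forces prox-regularity at $\ox$ for $0$. Indeed, implication {\bf(i)}$\Longrightarrow${\bf(ii)} of Theorem~\ref{main:VC}, which holds in arbitrary Banach spaces, yields a convex neighborhood $U\times V$ of $(\ox,0)$ and $\varepsilon>0$ such that $\varphi(x)\ge\varphi(u)+\la u^*,x-u\ra$ for all $x\in U$ whenever $(u,u^*)\in(U_\varepsilon\times V)\cap\gph\partial\varphi$. Shrinking to balls $\mathbb{B}_{\varepsilon'}(\ox)\times\mathbb{B}_{\varepsilon'}(0)\subset U\times V$ with $\varepsilon'\le\varepsilon$, this is precisely estimate \eqref{prox} with $r=0$, so $\varphi$ is prox-regular at $\ox$ for $0$.

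With these in place, the two implications follow directly. For {\bf(i)}$\Longrightarrow${\bf(ii)}: assuming variational convexity, the second observation supplies prox-regularity at $\ox$ for $0$; then, since $X$ is $2$-uniformly convex with smooth norm and $\varphi$ is prox-bounded, the chain {\bf(i)}$\Longrightarrow${\bf(ii)}$\Longrightarrow${\bf(iii)}$\Longrightarrow${\bf(iv)} of Theorem~\ref{1stequi} applies, and the first observation turns {\bf(iv)} into local convexity of $e_\lambda\varphi$; combining this with the prox-regularity just derived gives the Corollary's {\bf(ii)}. For the converse {\bf(ii)}$\Longrightarrow${\bf(i)}: under prox-regularity, local convexity of $e_\lambda\varphi$, and the additional weak sequential lower semicontinuity of $\varphi$ around $\ox$, the first observation identifies the hypothesis with assertion {\bf(iv)} of Theorem~\ref{1stequi}, all of whose requirements for implication {\bf(iv)}$\Longrightarrow${\bf(i)} are met, yielding variational convexity.

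The only nonroutine point is the second observation: because the Corollary bundles prox-regularity into its assertion {\bf(ii)}, whereas in Theorem~\ref{1stequi} prox-regularity is a standing hypothesis rather than part of the equivalence, one must independently extract prox-regularity from variational convexity before the theorem's {\bf(iii)}$\Longrightarrow${\bf(iv)} step becomes legitimately applicable in the forward direction. Once that extraction is recorded, the remainder is a transcription of Theorem~\ref{1stequi} with $\ox^*=0$.
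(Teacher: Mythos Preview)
Your proposal is correct and follows exactly the paper's intended route, namely specializing Theorem~\ref{1stequi} to the stationary case $\ox^*=0$ so that $e^{\ox^*}_\lambda\varphi=e_\lambda\varphi$. You even supply a detail the paper's one-line ``immediate consequence'' glosses over: since prox-regularity is a \emph{hypothesis} for the step {\bf(iii)}$\Longrightarrow${\bf(iv)} in Theorem~\ref{1stequi} but part of the \emph{conclusion} {\bf(ii)} in the Corollary, it must be extracted from variational convexity via Theorem~\ref{main:VC}, which you do correctly.
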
 

The last result of this section establishes a characterization of variational convexity for functions defined on {\em Hilbert spaces}, where we can replace the $\ox^*$-Moreau envelope \eqref{defi:Motilted} around $\ox$ by its standard counterpart \eqref{el} around $\ox+\lm\ov$, where $\ox^*\in\partial\ph(\ox)\subset X^*$ is replaced by $\ov\in\partial\ph(\ox)\subset X$ due to $X=X^*$. The corollary below is a Hilbert space extension of the finite-dimensional characterization obtained in \cite[Theorem~3.2]{kmp22convex}.

\begin{Corollary}  
Let $X$ be a Hilbert space, and let $\varphi: X\to\overline{\R}$ be an l.s.c.\ and prox-bounded function with $\bar{x}\in\dom\varphi$ and $\bar{v}\in\partial\varphi(\bar{x})$. Consider the following assertions:
\begin{itemize}
\item[\bf(i)] $\varphi$ is variationally convex at $\ox$ for $\ov$.
\item[\bf(ii)] $\varphi$ is prox-regular at $\ox$ for $\ov$, and the Moreau envelope $e_\lambda\varphi$ is locally convex around $\bar{x}+\lambda\bar{v}$ for all small $\lambda>0$.
\end{itemize}    
Then implication {\bf(i)}$\Longrightarrow${\bf(ii)} is satisfied under the general assumptions imposed, while the reverse implication {\bf(ii)}$\Longrightarrow${\bf(i)} holds if in addition $\varphi$ is weakly sequentially l.s.c.\ around $\ox$.
\end{Corollary}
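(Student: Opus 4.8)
The plan is to derive this corollary directly from Theorem~\ref{1stequi} by specializing to the Hilbert-space setting and translating assertion~{\bf(iv)} of that theorem into the statement about the \emph{standard} Moreau envelope through the relationship~\eqref{rela:Moreau}. First I would record the structural facts that put us inside the scope of Theorem~\ref{1stequi}: every Hilbert space is simultaneously $2$-uniformly convex and $2$-uniformly smooth with a smooth (indeed Fr\'echet differentiable) norm, and under the Riesz identification $X=X^*$ the duality mapping $J$ is the identity, so that $\ox^*\in\partial\ph(\ox)\subset X^*$ is identified with $\ov\in\partial\ph(\ox)\subset X$. Consequently, once $\varphi$ is prox-bounded (a standing assumption) and prox-regular at $\ox$ for $\ov$, all hypotheses required by the various implications of Theorem~\ref{1stequi} are automatically in force.

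The key algebraic bridge is that the identity in \eqref{rela:Moreau} holds not only at the base point but as a functional identity on a whole neighborhood: completing the square in the definition \eqref{defi:Motilted} of the $\ov$-Moreau envelope yields
\begin{equation*}
e_\lambda^{\ov}\varphi(x)=e_\lambda\varphi(x+\lambda\ov)-\langle\ov,x\rangle-\tfrac{\lambda}{2}\|\ov\|^2\;\text{ for all }\;x\in X.
\end{equation*}
The right-hand side differs from $x\mapsto e_\lambda\varphi(x+\lambda\ov)$ only by an affine term, and the latter is merely a translation of $e_\lambda\varphi$ by the fixed vector $\lambda\ov$. Since adding an affine function and composing with a translation both preserve convexity, it follows that $e_\lambda^{\ov}\varphi$ is locally convex around $\ox$ if and only if $e_\lambda\varphi$ is locally convex around $\ox+\lambda\ov$. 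This is exactly the link between assertion~{\bf(iv)} of Theorem~\ref{1stequi}, stated for the tilted envelope around $\ox$, and the Moreau-envelope condition appearing in assertion~{\bf(ii)} of the corollary.

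For the implication {\bf(i)}$\Longrightarrow${\bf(ii)} I would first extract prox-regularity of $\varphi$ at $\ox$ for $\ov$ straight from variational convexity: Theorem~\ref{main:VC}, implication {\bf(i)}$\Longrightarrow${\bf(ii)}, valid in any Banach space, supplies neighborhoods $U\times V$ and $\varepsilon>0$ with $\varphi(x)\ge\varphi(u)+\langle u^*,x-u\rangle$ for all $x\in U$ and $(u,u^*)\in(U_\varepsilon\times V)\cap\gph\partial\varphi$, which is precisely the prox-regularity estimate \eqref{prox} with $r=0$ after shrinking the localizing radii. With prox-regularity secured, the chain {\bf(i)}$\Longrightarrow${\bf(ii)}$\Longrightarrow${\bf(iii)}$\Longrightarrow${\bf(iv)} of Theorem~\ref{1stequi} applies and produces local convexity of $e_\lambda^{\ov}\varphi$ around $\ox$ for all small $\lambda$; the equivalence of the preceding paragraph then delivers local convexity of $e_\lambda\varphi$ around $\ox+\lambda\ov$. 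For the reverse {\bf(ii)}$\Longrightarrow${\bf(i)}, I would run the same equivalence backwards to convert the assumed local convexity of $e_\lambda\varphi$ around $\ox+\lambda\ov$ into assertion~{\bf(iv)} of Theorem~\ref{1stequi}, and then invoke implication {\bf(iv)}$\Longrightarrow${\bf(i)} of that theorem, whose hypotheses (prox-bounded, prox-regular, $2$-uniformly convex smooth space, and weak sequential lower semicontinuity around $\ox$) are all part of the present data. The points requiring care---rather than a genuine obstacle---are the verification that \eqref{rela:Moreau} propagates to a neighborhood identity and the bookkeeping matching the radii of Theorem~\ref{main:VC} to the prox-regularity constants; the substantive analytic content is entirely absorbed by Theorem~\ref{1stequi}.
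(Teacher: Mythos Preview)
Your proposal is correct and follows essentially the same route as the paper's proof: both reduce the corollary to Theorem~\ref{1stequi} by observing that the identity \eqref{rela:Moreau} extends to a pointwise identity on all of $X$, so that local convexity of $e_\lambda^{\ov}\varphi$ around $\ox$ and of $e_\lambda\varphi$ around $\ox+\lambda\ov$ differ only by an affine perturbation composed with a translation. Your treatment is in fact slightly more complete than the paper's, since you explicitly explain how prox-regularity (needed to invoke the implication {\bf(iii)}$\Longrightarrow${\bf(iv)} of Theorem~\ref{1stequi}) follows from variational convexity via Theorem~\ref{main:VC}; the paper leaves this implicit in the phrase ``combining finally Theorem~\ref{1stequi} with the above equivalence.''
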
 
\begin{proof} To deduce this corollary from  Theorem~\ref{1stequi}, it suffices to show that the local convexity of the $\bar{v}$-Moreau envelope $e^{\bar{v}}_{\lambda}\varphi$ around $\ox$ is equivalent to the local convexity of the standard one $e_{\lambda}\varphi$ around $\ox+\lambda\bar{v}$. Indeed, we get from \eqref{rela:Moreau} that
\begin{equation}\label{4.55}
e_{\lambda}^{\bar{v}}\varphi (x) = e_{\lambda}\varphi (x+\lambda \bar{v}) -\langle \bar{v},x\rangle -\frac{\lambda}{2}\|\bar{v}\|^2\;\text{ for all }\;x\in X.
\end{equation}
Defining $\psi_{\bar{v}} (x): = x + \lambda \bar{v}$ for $x\in X$, the equality in \eqref{4.55} can be rewritten as 
\begin{equation*}
e_{\lambda}^{\bar{v}}\varphi (x) = (e_{\lambda}\varphi \circ \psi_{\bar{v}})(x)  -\langle \bar{v},x\rangle -\frac{\lambda}{2}\|\bar{v}\|^2,\quad x\in X.
\end{equation*}
Since $\psi_{\bar{v}}$ is an affine mapping while $\la \bar{v},\cdot\ra$ is a convex function, the local convexity of $e_{\lambda}\varphi$ around $\ox + \lambda \ov$ yields the same property of $e^{\bar{v}}_{\lambda} \varphi$ around $\ox$. On the other hand, \eqref{4.55} is represented as
\begin{equation*}
e_{\lambda} \varphi (x) = e^{\bar{v}}_{\lambda}\varphi (x-\lambda \bar{v}) + \la \bar{v},x\ra -\dfrac{\lambda}{2}\|\bar{v}\|^2 = (e^{\bar{v}}_{\lambda}\varphi \circ \theta_{\bar{v}})(x) + \la \bar{v},x\ra -\dfrac{\lambda}{2}\|\bar{v}\|^2,
\end{equation*}
where $\theta_{\bar{v}}(x):=x-\lambda \bar{v}$, $x\in X$, is an affine mapping. Thus the local convexity of $e^{\bar{v}}_{\lambda} \varphi$ around $\ox$ reduces to that of $e_{\lambda}\varphi$ around $\ox + \lambda \bar{v}$. Combining finally Theorem~\ref{1stequi} with the above equivalence, we arrive at the claimed result.
\end{proof}

\end{document}